\def\pmod #1{\ ({\rm{mod}}\ #1)}
\def\Z{\Bbb Z}
\def\N{\Bbb N}
\def\Q{\Bbb Q}
\def\l{\left}
\def\r{\right}
\def\bg{\bigg}
\def\({\bg(}
\def\){\bg)}
\def\ord{{\rm ord}}
\def\gen{{\rm gen}}
\def\Aut{{\rm Aut}}
\def\ln{{\rm ln}}
\def\gs{\geqslant}
\def\ve{\varepsilon}
\def\eq{\equiv}
\def\da{\delta}
\theoremstyle{plain}
\newtheorem{theorem}{Theorem}
\newtheorem{lemma}{Lemma}
\newtheorem{corollary}{Corollary}
\newtheorem{conjecture}{Conjecture}
\theoremstyle{definition}
\theoremstyle{remark}
\newtheorem{remark}{Remark}
\begin{document}
 \baselineskip=17pt
\hbox{}
\medskip
\title[Sums of squares with restrictions involving primes]
{Sums of squares with restrictions involving primes}
\date{}
\author[Hai-Liang Wu and Zhi-Wei Sun] {Hai-Liang Wu and Zhi-Wei Sun}

\thanks{2010 {\it Mathematics Subject Classification}.
Primary 11E25; Secondary 11A41, 11D85, 11E20, 11F27, 11F37.
\newline\indent {\it Keywords}. Sums of four squares, primes, ternary quadratic forms.
\newline \indent Supported by the National Natural Science
Foundation of China (Grant No. 11571162).}

\address {(Hai-Liang Wu)  Department of Mathematics, Nanjing
University, Nanjing 210093, People's Republic of China}
\email{\tt whl.math@smail.nju.edu.cn}

\address {(Zhi-Wei Sun) Department of Mathematics, Nanjing
University, Nanjing 210093, People's Republic of China}
\email{{\tt zwsun@nju.edu.cn}}

\begin{abstract}
 The well-known Lagrange's four-square theorem states that any integer $n\in\N=\{0,1,2,...\}$ can be written as the sum of four squares. Recently, Z.-W. Sun
 investigated the representations of $n$ as $x^2+y^2+z^2+w^2$ with certain linear restrictions involving the integer variables $x,y,z,w$. In this
 paper, via the theory of quadratic forms, we further study the representations $n=x^2+y^2+z^2+w^2$ (resp., $n=x^2+y^2+z^2+2w^2$) with certain linear restrictions involving primes. For example, we obtain the following results:

 (i) Each positive integer $n>1$ can be written as $x^2+y^2+z^2+2w^2$ ($x,y,z,w\in\mathbb N$) with $x+y$ prime.

 (ii) Every positive integer can be written as $x^2+y^2+z^2+2w^2$ ($x,y,z,w\in\mathbb N$) with $x+2y$ prime.

 (iii) Let $k$ be an arbitrary positive integer, and let $d$ be a positive odd integer with $4d^2+1$ prime.
Then any sufficiently large integer can be written as $x^2+y^2+z^2+2w^2$ $(x,y,z,w\in\mathbb N)$ with $x+2dy=p^k$ for some prime $p$.
\end{abstract}

\maketitle

\section{Introduction}
\setcounter{lemma}{0}
\setcounter{theorem}{0}
\setcounter{corollary}{0}
\setcounter{remark}{0}
\setcounter{equation}{0}
\setcounter{conjecture}{0}
\setcounter{proposition}{0}

In $1770$, Lagrange proved that any natural number $n$ can be written as the sum of four squares. This celebrated
result  now is well known as Lagrange's four-square theorem. The readers may consult \cite{G} for more details.
Motivated by this, many mathematicians have studied problems
involving sums of squares. In 1917, Ramanujan \cite{R} claimed that there are $55$ candidates of positive definite integral
diagonal quaternary
quadratic forms that represent all positive integers. Ten years later, Dickson \cite{D1} showed that
the quaternary form $x^2+2y^2+5z^2+5w^2$ included in Ramanujan's list represents all positive integers except $15$
and confirmed that Ramanujan's assertion for all the other $54$ forms is true.
Along this line, there are lots of research work on universal integral quadratic polynomials (a quadratic polynomial is said to be universal if it represents all natural numbers over $\Z$), see, e.g.,  the recent paper
of Sun \cite{S18}.

In contrast with the above, we may also consider the quadratic equation
$$f(x_1,x_2,...,x_k)=n.$$
If it is solvable over $\Z$ (or $\N$), then can we
find a particular solution $(x_1,x_2,...,x_k)\in\Z^k$ (resp. $\N^k$) satisfying some given algebraic conditions?
A typical example of this question is the following Cauchy's Lemma (cf. \cite[p.\,31]{N}).
\medskip

\noindent{\bf Cauchy's Lemma}. {\it  Let $n$ and $m$ be positive odd integers with $m^2<4n$ and $3n<m^2+2m+4$. Then
there exist $s,t,u,v\in\N$ such that
$$n=s^2+t^2+u^2+v^2\ \text{and}\ s+t+u+v=m.$$}
Let $P(x,y,z,w)$ be a polynomial in $\Z[x,y,z,w]$ and let $\mathcal{S}$ be a given subset
of $\N$ (e.g., the set of squares, or the set of primes).
The second author \cite{S17a} investigated the equation
\begin{equation*}
\left\{
\begin{aligned}
x^2+y^2+z^2+w^2&=n,\\
P(x,y,z,w)\in\mathcal{S}.\\
\end{aligned}
\right.
\end{equation*}
For example, he proved that for each natural number $n$
there exist $x,y,z,w\in\N$ such that $n=x^2+y^2+z^2+w^2$ and $x^4+8y^3z+8yz^3$ is a square. Moreover,
he \cite{S17a} would like to offer \$1350 (USD) for the first solution of his following conjecture.
\medskip

\noindent{\bf 1-3-5 Conjecture} (Sun \cite{S17a}). {\it Any $n\in\N$ can be written as $x^2+y^2+z^2+w^2$
$(x,y,z,w\in\N)$ with $x+3y+5z$ a square.}
\medskip

This conjecture has been verified for $n$ up to $10^{10}$ by Q.-H. Hou at
Tianjin Univ. Here we list some progress on this conjecture.
\begin{itemize}
\item
Y.-C. Sun and Z.-W. Sun \cite[Theorem 1.8]{SS} used Euler's four-square
identity to show that any $n\in\N$ can be written as
$x^2+y^2+z^2+w^2$ with $x,y,5z,5w\in\Z$ such that $x+3y+5z$ is a square.

\item
In \cite[Theorem 1.2]{WS}, with the help of half-integral modular forms, we proved that
there is a finite set $A$ of positive integers such that any sufficiently large integer not in the
set $\{16^ka:\ a\in A,\ k\in\N\}$ can be written as
$x^2+y^2+z^2+w^2$ with $x,y,z,w\in\Z$ and $x+3y+5z\in\{4^k:\ k\in\N\}$.

\end{itemize}
In this paper, we focus on the representations $n=x^2+y^2+z^2+w^2$ (or $n=x^2+y^2+z^2+2w^2$) with
some additional linear restrictions on the variables $x,y,z,w$.

Before stating our results, we first recall the following known fact:
Each number in the set $\bigcup_{r\in\N}\{4^r\times2, 4^r\times6, 4^r\times14\}$ has only a single partition
into four squares (cf. \cite[p. 86]{G}), namely,
\begin{align*}
4^r\times2=&(2^r)^2+(2^r)^2+0+0,
\\4^r\times6=&(2^{r+1})^2+(2^r)^2+(2^r)^2+0,
\\4^r\times14=&(3\times2^r)^2+(2^{r+1})^2+(2^r)^2+0.
\end{align*}
On the other hand, any integer
of the form $4^r(8l+7)\ (k,l\in\N)$ can not be written as the sum of three squares. Therefore, for any $a,b,c,d\in\N$
with $a^2+b^2+c^2+d^2\ne0$, there does not exist a finite subset $\mathcal{S}$ of $\N$
such that any sufficiently large integer $n$ can be written as $x^2+y^2+z^2+w^2\ (x,y,z,w\in\N)$
with $ax+by+cz+dw\in \mathcal{S}.$

In view of this fact, we turn to study the representations of $n\in\N$ by
$x^2+y^2+z^2+2w^2$ with certain additional linear restriction.

\begin{theorem}\label{Thm 1.1}
Let $k$ be a positive integer, and let $d$ be a positive odd integer with $1+4d^2$ prime.
Then any sufficiently large integer $n$ can be written as $x^2+y^2+z^2+2w^2$ $(x,y,z,w\in\N)$ with $x+2dy=p^k$ for
some prime $p$.
\end{theorem}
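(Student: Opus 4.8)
The plan is to linearize the condition $x+2dy=p^k$ by exploiting that $q:=1+4d^2$ is prime. Since $q=(1+2di)(1-2di)$ in $\Z[i]$, one has the identity
$$(1+4d^2)(x^2+y^2)=(x+2dy)^2+(2dx-y)^2.$$
Writing $m=p^k$ and $t=2dx-y$ and multiplying $n=x^2+y^2+z^2+2w^2$ by $q$, the problem becomes that of representing
$$qn-p^{2k}=t^2+qz^2+2qw^2$$
by the ternary form $T(t,z,w)=t^2+qz^2+2qw^2$, subject to the congruence $t\equiv 2dp^k\pmod q$ (which makes $x=(p^k+2dt)/q$ and $y=(2dp^k-t)/q$ integral) and to the band condition $-p^k/(2d)\le t\le 2dp^k$ (which makes $x,y\ge 0$). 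Thus everything reduces to a representation of $qn-p^{2k}$ by $T$ with the single variable $t$ pinned to an arithmetic progression and confined to a short interval.

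First I would settle the local solvability of $T(t,z,w)=qn-p^{2k}$. The discriminant of $T$ is $2q^2$, so the only bad primes are $2$ and $q$. Modulo $q$ the form is $t^2$, so I need $qn-p^{2k}\equiv -p^{2k}\pmod q$ to be a square; this is exactly where the hypothesis enters, for $4d^2=q-1\equiv -1\pmod q$ shows $-1\equiv(2d)^2$ is a square mod $q$, and correspondingly $t\equiv 2dp^k$ gives $t^2\equiv 4d^2p^{2k}\equiv -p^{2k}$. At the prime $2$ a short computation shows that $T$ represents every residue class mod $8$, so there is no dyadic obstruction and $T$ is $2$-adically universal. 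Hence, for $p\ne q$, the integer $qn-p^{2k}$ is represented by the genus of $T$.

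Next I would pass from the genus to the form itself. Since $T$ is positive definite ternary, the Duke--Schulze-Pillot theory guarantees that every sufficiently large integer represented by the spinor genus of $T$ is represented by $T$, the only escapees lying in finitely many spinor-exceptional square classes. I would choose the prime $p$ to be odd, distinct from $q$, and to make $qn-p^{2k}$ avoid those finitely many classes (a congruence condition on $p$). As for the size, taking $p^k\ge\sqrt n$ makes the upper bound $t\le 2dp^k$ automatic, since then $\sqrt{qn-p^{2k}}\le\sqrt{(q-1)n}=2d\sqrt n\le 2dp^k$, while taking $p^k\le\sqrt{(q-\varepsilon)n}$ keeps $qn-p^{2k}\ge\varepsilon n$ large. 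The window $p^k\in[\sqrt n,\sqrt{(q-\varepsilon)n}]$ has multiplicative length exceeding $1$, so the prime number theorem (in the relevant residue classes) supplies such a prime $p$ once $n$ is large.

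The hardest step, and the real obstacle, is to produce a representation whose variable $t$ lies simultaneously in the progression $t\equiv 2dp^k\pmod q$ and in the band; because the congruence fixes the sign of $t$, I cannot merely flip signs, and I genuinely need a lattice point on the ellipsoid $T=qn-p^{2k}$ inside a prescribed slab and progression. For this I would invoke the equidistribution of representation points on the ellipsoid (Linnik--Duke): as $N=qn-p^{2k}\to\infty$ the normalized solutions of $T=N$ equidistribute, so a positive proportion satisfy $t\ge 0$ (hence the whole band condition, the upper bound being automatic), and the congruence $t\equiv 2dp^k\pmod q$ retains a positive fraction of these. Unwinding the substitution then returns $x,y,z,w\in\N$ with $x+2dy=p^k$. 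The delicate point is coordinating the three inputs --- equidistribution to hit the band, the prime number theorem to place $p^k$ in the correct window and residues, and the genus/spinor analysis to guarantee representability --- for one and the same prime $p$ and integer $n$; this coupling is where the bulk of the technical work will lie.
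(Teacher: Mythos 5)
Your reduction coincides with the paper's: multiply by $q=1+4d^2$, pass to the ternary form $T(t,z,w)=t^2+qz^2+2qw^2$, pin $t$ to the class of $\pm 2dp^k$ modulo $q$, and feed $N=qn-p^{2k}$ to genus theory plus Duke--Schulze-Pillot. But two of your steps have genuine gaps. First, your handling of spinor exceptions fails as stated: the spinor-exceptional integers lie in finitely many \emph{square classes} $c\,\Q^{\times2}$, and membership in a square class is not a union of congruence classes, so no ``congruence condition on $p$'' can force $qn-p^{2k}$ to avoid them. The paper avoids this issue entirely by computing the local spinor norm groups $\theta(O^+(T_v))$ at every place (the hypothesis $q\equiv 5\pmod 8$ enters at $v=2$) and invoking Kitaoka's index formula to show that $\gen(T)$ consists of a \emph{single} spinor genus; then no exceptional integers exist, and the bounded-divisibility hypothesis at the unique anisotropic prime $q$ is automatic because $q\nmid qn-p^{2k}$. (A smaller but real defect of the same kind: ``$T$ hits every class mod $8$, hence is $\Z_2$-universal'' is not a valid inference for ternary forms --- $x^2+y^2+7z^2$ hits every class mod $8$ yet does not represent $14$ over $\Z_2$ --- which is why the paper cites Dickson's tables for the $2$-adic analysis.)

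Second, the step you call the hardest --- invoking Linnik--Duke equidistribution, restricted to the congruence class $t\equiv 2dp^k\pmod q$, to land $t$ in the band --- is both unproved as you state it (equidistribution of representations subject to a congruence condition on a coordinate is a nontrivial extension of Duke's theorem, merely asserted here, and its coupling with the other inputs is exactly what you admit you have not done) and, more importantly, unnecessary: the difficulty is an artifact of your window $n\le p^{2k}\le(q-\varepsilon)n$. The paper takes $4d^2n\le p^{2k}\le qn$ instead. With that window the band condition holds for \emph{every} representation of $qn-p^{2k}$, by the trivial bound $|t|\le\sqrt{qn-p^{2k}}$ alone: the inequality $t\le 2dp^k$ needs only $p^{2k}\ge n$, and $t\ge -p^k/(2d)$ needs only $p^{2k}\ge 4d^2n$. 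Since $t^2\equiv -p^{2k}\pmod q$ has exactly the two roots $t\equiv\pm 2dp^k\pmod q$, the required congruence is then arranged by a sign flip, which does not disturb either inequality because the trivial bound is symmetric in $t\mapsto -t$. The shrunken window still has multiplicative length $(1+1/4d^2)^{1/2k}>1$, so Dusart's theorem (or the prime number theorem) still supplies the prime $p$. With this one change to your window, your entire final paragraph --- the equidistribution input and the ``coordination'' of the three tools --- disappears, and the proof closes along the elementary lines of the paper.
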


\begin{corollary}\label{Cor 1.1}
Let $d$ be a positive odd integer with $4d^2+1$ prime.
Then any sufficiently large integer $n$ can be written as $x^2+y^2+z^2+2w^2$ $(x,y,z,w\in\N)$ with $x+2dy$ prime.
\end{corollary}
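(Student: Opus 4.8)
The plan is to deduce Corollary~\ref{Cor 1.1} directly from Theorem~\ref{Thm 1.1}, for the corollary is precisely the special case $k=1$ of the theorem. Both statements impose the identical hypothesis on $d$, namely that $d$ is a positive odd integer for which $4d^2+1$ is prime, so no additional arithmetic input is needed and the entire substantive content has already been established in proving the theorem.

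Concretely, I would apply Theorem~\ref{Thm 1.1} with the exponent $k$ set equal to $1$. This yields, for every sufficiently large integer $n$, a representation $n=x^2+y^2+z^2+2w^2$ with $x,y,z,w\in\N$ and $x+2dy=p^1=p$ for some prime $p$. Since $p^1=p$ is itself prime, the condition $x+2dy=p^k$ degenerates exactly into the requirement that $x+2dy$ be prime, which is the conclusion asserted by the corollary. Thus the implication is immediate once the theorem is in hand.

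I expect no genuine obstacle at this stage: the corollary carries strictly less information than the theorem, since it forgoes the freedom to prescribe an arbitrary prime power $p^k$ and asks only for a prime value $p$. All of the difficulty, controlling the ternary behaviour of $x^2+y^2+z^2$ after the linear quantity $x+2dy$ has been fixed, and then forcing that quantity into the primes (resp.\ prime powers), is handled inside the proof of Theorem~\ref{Thm 1.1}. The corollary merely records the $k=1$ instance of that more general result.
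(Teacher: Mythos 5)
Your proposal is correct and matches the paper's (implicit) argument: the paper offers no separate proof of Corollary \ref{Cor 1.1} precisely because it is the $k=1$ specialization of Theorem \ref{Thm 1.1}, exactly as you argue.
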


\begin{remark}
For some particular $d$ satisfying the above condition, we can give an effective bound $\mathcal{B}(d)$ such that
each integer $n>\mathcal{B}(d)$ can be written as $x^2+y^2+z^2+2w^2$ $(x,y,z,w\in\N)$ with $x+2dy$ prime.
In particular, when $d=1$ we obtain the following result for all positive integers.
\end{remark}

\begin{corollary}\label{Cor 1.2}
For any positive integer $n$, there exist $x,y,z,w\in\N$ and a prime $p$ such that
$$n=x^2+y^2+z^2+2w^2\ \text{and}\ x+2y=p.$$
\end{corollary}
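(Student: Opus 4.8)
The plan is to split the range of $n$ into a ``large'' part handled analytically and a ``small'' part handled by a finite search. Since $d=1$ is a positive odd integer with $4d^2+1=5$ prime, both Theorem \ref{Thm 1.1} (with $k=1$) and Corollary \ref{Cor 1.1} apply at $d=1$, so the conclusion already holds for all sufficiently large $n$; the whole point of the corollary is to make the implied threshold explicit and then to cover every remaining small $n$. Accordingly I would fix the effective bound $\mathcal B:=\mathcal B(1)$ furnished by the Remark and prove the statement separately for $n>\mathcal B$ and for $1\ls n\ls\mathcal B$.

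For $n>\mathcal B$ I would appeal to the effective form of Corollary \ref{Cor 1.1} at $d=1$. To justify that the threshold can be taken effectively here, I would revisit the proof of Theorem \ref{Thm 1.1}: imposing $x+2y=p$ and eliminating $x=p-2y$ turns the problem into representing $n$ by $5y^2+z^2+2w^2$ after a linear shift in $y$, i.e.\ into a representation question for a ternary quadratic form of small determinant, summed over primes $p$. The usual ineffectivity hidden in ``sufficiently large'' stems from a possible Siegel zero in the lower bound for the number of such ternary representations; the substance of the $d=1$ case is that one shows the relevant ternary form to be alone in its genus (so that the local--global principle holds unconditionally), which removes the Siegel-type input and yields an explicit $\mathcal B$.

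For $1\ls n\ls\mathcal B$ I would verify the claim by a finite computation. For each such $n$ and each small prime $p$, I would run over the nonnegative pairs $(x,y)$ with $x+2y=p$ and test whether $m:=n-x^2-y^2$ is a nonnegative integer representable by the binary form $z^2+2w^2$; this uses the classical criterion that $m\gs0$ is of this shape if and only if every prime $q\eq 5,7\pmod 8$ divides $m$ to an even power, so each case reduces to a factorization check. A short tabulation then exhibits an admissible triple $(p,x,y)$ for every $n\ls\mathcal B$; for instance $n=11=3^2+0^2+0^2+2\cdot1^2$ with $x+2y=3$ prime, a case not reachable through $p=2$ alone.

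The main obstacle is precisely the effectivity in the first range. Because a single prime $p$ confines the pair $(z,w)$ to a form representing a density-zero set of integers, no fixed finite collection of primes can cover all $n$; one genuinely needs $p$ to grow with $n$, and hence an asymptotic count of representations. Turning that count into an \emph{explicit} bound $\mathcal B$ --- by exploiting that the relevant ternary form is alone in its genus and by controlling the error terms --- is the delicate step, after which the finite verification is entirely routine.
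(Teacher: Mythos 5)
Your proposal follows essentially the same route as the paper's proof: for large $n$ the paper takes a prime $p\in[\sqrt{4n},\sqrt{5n}]$ via Dusart's effective theorem (giving the explicit threshold $3275^2/4$), represents $5n-p^2$ by the ternary form $x^2+5y^2+10z^2$ unconditionally, and then recovers $x=p-2y\ge 0$, $y\ge 0$ exactly as in Theorem \ref{Thm 1.1}, while small $n$ are disposed of by a finite computation. The only real difference is in the citation for the key local--global step: where you assert the form is alone in its genus (true, and provable, e.g., by Kneser's neighbor method), the paper instead invokes Dickson's classical result that $x^2+5y^2+10z^2$ is a \emph{regular} form --- a weaker property that equally eliminates the ineffective Duke--Schulze-Pillot/Siegel input.
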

\begin{remark} In contrast, Sun \cite[Conjecture 4.3(ii)]{S17b} conjectured that any integer $n>1$ not divisible by $4$ can be written as $x^2+y^2+z^2+w^2$ ($x,y,z,w\in\N$) such that $p=x+2y+5z$, $p-2$, $p+4$ and $p+10$ are all prime.
\end{remark}

Next we consider the restrictions $x+y$ and $x+w$.
\begin{corollary}\label{Cor 1.3}
{\rm (i)} For any positive integer $n>1$, there are $x,y,z,w\in\N$ and a prime $p$ such that
$$n=x^2+y^2+z^2+2w^2\ \text{and}\ x+y=p.$$

{\rm (ii)} Any positive integer $n>2$ can be written as $x^2+y^2+z^2+2w^2$ with $x+w$ prime.
\end{corollary}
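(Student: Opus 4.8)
The plan is to treat both parts as applications of the circle of ideas already used for Theorem~\ref{Thm 1.1} and Corollary~\ref{Cor 1.2}: in each case I would use a quadratic identity to turn the prescribed linear form into one of the coordinates, thereby reducing the existence of a suitable representation to the representability of a single value by an auxiliary ternary quadratic form, with the prime $p$ chosen in a short interval and a fixed residue class. Since both forms arise from $x^2+y^2+z^2+2w^2$, the relevant local data sit at $2$ (for part (i)) and at $3$ (for part (ii)), and these are what need to be controlled.

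For part (i), I would start from the identity $2(x^2+y^2)=(x+y)^2+(x-y)^2$, which gives
\begin{equation*}
2n=(x+y)^2+(x-y)^2+2z^2+(2w)^2 .
\end{equation*}
Writing $p=x+y$ and $s=x-y$, a representation of $n$ with $x+y=p$ prime is equivalent to a representation
\begin{equation*}
2n-p^2=s^2+2z^2+(2w)^2,\qquad s\eq p\pmod 2,\ |s|\ls p ,
\end{equation*}
from which one recovers $x=(p+s)/2,\ y=(p-s)/2$ and $z,w$ in $\N$. Choosing the prime $p$ in the range $\sqrt n\ls p\ls\sqrt{2n}$ makes $2n-p^2\ls p^2$, so that $|s|\ls\sqrt{2n-p^2}\ls p$ holds automatically; thus it remains to represent $2n-p^2$ by the ternary form $s^2+2z^2+4w^2$ with $s$ in the correct parity class.

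For part (ii), where the coefficient-$2$ variable is involved, I would instead use
\begin{equation*}
3(x^2+2w^2)=2(x+w)^2+(x-2w)^2 ,
\end{equation*}
so that with $p=x+w$ and $u=x-2w$ one has $3n=2p^2+u^2+3y^2+3z^2$, and the desired representation becomes
\begin{equation*}
3n-2p^2=u^2+3y^2+3z^2,\qquad u\eq p\pmod 3,\ -2p\ls u\ls p ,
\end{equation*}
with $x=(2p+u)/3,\ w=(p-u)/3\in\N$. Taking $p$ near $\sqrt{3n/2}$ (precisely $\sqrt n\ls p\ls\sqrt{3n/2}$) again makes the size conditions automatic, leaving the representability of $3n-2p^2$ by the ternary form $u^2+3y^2+3z^2$ in the residue class $u\eq p\pmod 3$.

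The main obstacle in both parts is exactly this last step: one must show that the auxiliary ternary form represents the target integer in the prescribed congruence (hence square) class, which means verifying the local conditions at the bad prime and ruling out the spinor-exceptional classes, and then producing an honest prime $p$ in the short interval that simultaneously satisfies the congruence constraint. This is precisely the analytic/arithmetic input behind Theorem~\ref{Thm 1.1}, and a priori it only yields all sufficiently large $n$; the passage to every $n>1$ (resp.\ $n>2$) is then completed by invoking the effective bound mentioned after Corollary~\ref{Cor 1.1} and checking the finitely many remaining small $n$ by hand. The thresholds match this phrasing, since $n=1$ admits only $1=1^2+0+0+0$ with $x+y\ls 1$ (part (i)), while for part (ii) both $n=1$ and $n=2$ admit only representations with $x+w\ls 1$.
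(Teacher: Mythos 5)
Your reduction is exactly the one the paper uses: for (i) the identity $2n=(x+y)^2+(x-y)^2+2z^2+4w^2$ with a prime $p\in[\sqrt n,\sqrt{2n}]$, and for (ii) the identity $3n=2(x+w)^2+(x-2w)^2+3y^2+3z^2$ with a prime $p\in[\sqrt n,\sqrt{3n/2}]$, reducing everything to representing $2n-p^2$ by $s^2+2z^2+4w^2$ and $3n-2p^2$ by $u^2+3y^2+3z^2$ in suitable classes (and indeed the class conditions are essentially free: $s\eq p\pmod 2$ is automatic because $2n-p^2$ is odd, and $u\eq p\pmod 3$ follows from $u^2\eq p^2\pmod 3$ after a sign change). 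The genuine gap is in how you finish. You propose to settle the representability step by ``the analytic/arithmetic input behind Theorem~\ref{Thm 1.1}'', i.e.\ the spinor genus plus Duke--Schulze-Pillot machinery of Lemma~\ref{modular forms}. That lemma is \emph{ineffective}: it guarantees representation only for $n$ beyond an unspecified, uncomputable threshold, so your plan to ``check the finitely many remaining small $n$ by hand'' cannot be carried out --- one does not know where the checking may stop. Invoking ``the effective bound mentioned after Corollary~\ref{Cor 1.1}'' does not repair this: that remark concerns the restriction $x+2dy$ (a different family of auxiliary forms), it is a remark rather than a citable proved lemma, and in the paper its substance (the case $d=1$, Corollary~\ref{Cor 1.2}) is itself obtained not by making Duke's theorem effective but by the ingredient your proposal never mentions.

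That missing ingredient is \emph{regularity}. Both auxiliary forms $x^2+2y^2+4z^2$ and $x^2+3y^2+3z^2$ occur in Dickson's table of regular ternary forms \cite[pp.~112--113]{D2}, so each represents every nonnegative integer that is represented by its genus, with no size restriction whatsoever. The local conditions are trivial here: $2n-p^2$ is odd, hence avoids the excluded classes of $x^2+2y^2+4z^2$ (which are all even, of the shape $4^k(16m+14)$); and $3n-2p^2\eq p^2\eq 1\pmod 3$ since $p>3$, hence avoids the excluded classes of $x^2+3y^2+3z^2$ (which are all $\eq 2\pmod 3$ or divisible by $3$). In particular there are no spinor-exceptional classes to rule out and no asymptotic statement anywhere: the only quantitative input left is the existence of a prime in the interval, which Dusart's result (Lemma~\ref{prime}) supplies effectively for $n>3275^2$, the finitely many smaller $n$ being verified by computer. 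Replace your appeal to Lemma~\ref{modular forms} by this regularity argument and your outline becomes the paper's proof; as written, however, your route proves only an ineffective ``sufficiently large $n$'' version of the corollary, not the statement for all $n>1$ (resp.\ $n>2$).
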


Now we state our second theorem.

\begin{theorem}\label{Thm 1.2}
Let $\lambda$ be an arbitrary positive odd integer.

{\rm (i)} Any integer $n\ge \lambda^2/8$ can be written as
$x^2+y^2+z^2+2w^2$ $(x,y,z,w\in\Z)$ with $x+y+2z+2w=\lambda.$

{\rm (ii)} Each integer $n\ge \lambda^2/16$ can be written as
$x^2+y^2+z^2+2w^2$ $(x,y,z,w\in\Z)$ with $x+2y+3z+2w=\lambda.$

{\rm (iii)} If $7\nmid \lambda$, then for each odd integer $n\ge\lambda/\sqrt{14}$, there exist
$x,y,z,w\in\Z$ such that
$$n^2=x^2+y^2+z^2+w^2\ \text{and}\ x+2y+3z=\lambda.$$
\end{theorem}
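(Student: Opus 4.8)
The plan is to treat all three parts by one mechanism: impose the linear restriction, restrict the (positive definite) quadratic form to the resulting affine hyperplane, complete the square, and thereby reduce each statement to the assertion that a specific integral ternary form represents a specific integer lying in a prescribed residue class. Write $Q(\mathbf{x})=\mathbf{x}^{T}D\mathbf{x}$ for the quaternary form and $L(\mathbf{x})=\ell^{T}\mathbf{x}$ for the linear form, so that for part (i) we have $D=\mathrm{diag}(1,1,1,2)$ and $\ell=(1,1,2,2)^{T}$. The real minimum of $Q$ on the hyperplane $\{L=\lambda\}$ equals $\lambda^{2}/(\ell^{T}D^{-1}\ell)$, and a direct computation gives $\ell^{T}D^{-1}\ell=1+1+4+2=8$; the analogous constants are $16$ for part (ii) and $14$ for part (iii). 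This is exactly why the thresholds $\lambda^{2}/8$, $\lambda^{2}/16$ and $\lambda/\sqrt{14}$ appear: they are precisely the real-solvability bounds, and the content of the theorem is that integer solvability holds as soon as real solvability does.

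Carrying out the completion of the square for part (i), I would shift $\mathbf{x}$ by the minimizer and clear denominators, so that $Q(\mathbf{x})=n$ on $\{L=\lambda\}\cap\Z^{4}$ becomes the problem of representing $8n-\lambda^{2}$ by the ternary form $x^{2}+2y^{2}+4z^{2}$, subject to congruence conditions on $x,y,z$ coming from integrality of the original variables and from $\lambda$ being odd. First observe that $8n-\lambda^{2}\equiv 7\pmod 8$ (using $\lambda^{2}\equiv 1$), which forces all three of $x,y,z$ to be odd in any representation, so the parity part of the coset condition is automatic. The remaining conditions are congruences modulo $4$ and modulo $8$, and these can be arranged using the sign-change automorphisms $x\mapsto -x$ and $y\mapsto -y$ of the diagonal form: since $\lambda$ is odd, exactly one of $\pm x$ lies in the class required modulo $4$, after which one of $\pm y$ corrects the class modulo $8$. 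Hence it suffices to prove that $x^{2}+2y^{2}+4z^{2}$ represents every positive integer $\equiv 7\pmod 8$.

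This last point is where the real work lies, and I would handle it through the theory of ternary forms: $x^{2}+2y^{2}+4z^{2}$ is a regular form whose genus—equivalently, by regularity, the form itself—represents all positive integers except those of the shape $4^{k}(16l+14)$, and no such excluded integer is $\equiv 7\pmod 8$. Thus every positive $m\equiv 7\pmod 8$, in particular $m=8n-\lambda^{2}$ with $n\ge \lambda^{2}/8$, is represented, completing (i). For (ii) and (iii) I would run the identical scheme: (ii) reduces to representing $16n-\lambda^{2}$ by the ternary form obtained from $\ell=(1,2,3,2)$, and (iii), after eliminating one variable via $x+2y+3z=\lambda$ and writing $w^{2}$ plus the restricted binary form, reduces to representing $14n^{2}-\lambda^{2}$ by a ternary form of determinant tied to $14$. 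Here the hypotheses that $n$ be odd and that $7\nmid\lambda$ are exactly what guarantee that $14n^{2}-\lambda^{2}\equiv 5\pmod 8$ and $14n^{2}-\lambda^{2}\not\equiv 0\pmod 7$, so that no local obstruction occurs at the primes $2$ and $7$ dividing the determinant and the relevant regularity statement again applies.

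The main obstacle I anticipate is precisely the ternary representation step: one must (a) identify the correct genus and class data of each ternary form that arises and verify it is exception-free on the relevant square classes, so that one obtains all $n$ above the clean threshold with no sporadic exceptions; and (b) check that the coset conditions forced by integrality are compatible with the automorphisms of the form, so that any abstract representation can be adjusted into the required residue class. Establishing these regularity facts—whether by genus theory and the local–global principle for one-class genera, or by appeal to Dickson's classification of regular ternary forms—is the technical heart of the argument, while the completion of the square and the sign-adjustment bookkeeping are routine by comparison.
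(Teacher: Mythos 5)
Your overall strategy---restrict the quaternary form to the affine hyperplane, pass to a ternary representation problem with a coset/congruence condition, and then combine regularity of ternary forms with automorphisms to meet that condition---is in spirit the same as the paper's, which implements it through explicit multiplication identities and Chinese Remainder Theorem parametrizations rather than lattice quotients. For part (i) your route can in fact be completed, and it is a genuine variant of the paper's: the restricted form is $T(a,b,c)=8(a^2+b^2+c^2)-(a+b+2c)^2=7a^2+7b^2+4c^2-2ab-4ac-4bc$ subject to the single condition $a+b+2c\equiv\lambda\pmod{8}$, and the identity $T(a,b,c)=(a+b-2c)^2+2(a+b)^2+4(a-b)^2$ exhibits $T$ as $u^2+2v^2+4s^2$ restricted to the index-$4$ sublattice $u\equiv v\equiv s\pmod{2}$; since $8n-\lambda^2\equiv7\pmod 8$ forces $u,v,s$ all odd, the parity condition is automatic, the coset condition becomes $2v-u\equiv\lambda\pmod 8$, and the four sign choices $\pm u,\pm v$ do realize all four odd residues modulo $8$. (The paper instead writes $8n-\lambda^2=x_1^2+x_2^2+2x_3^2$ and multiplies by $11$ in its Lemma 3.2.) But note that you asserted all of this rather than proved it: the specific form $x^2+2y^2+4z^2$, the exact coset condition, and the sufficiency of sign changes each require the computation above, none of which appears in your proposal.

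The genuine gap is in parts (ii) and (iii), where ``run the identical scheme'' conceals precisely the difficulties that constitute the paper's actual work, and where your key claim---that sign-change automorphisms always suffice to arrange the congruence conditions---breaks down. For (ii) the restricted form has Gram determinant $512$ with a condition modulo $16$; you never identify a regular form into which it embeds, and the paper's parallel treatment (Lemma 3.1) shows that the analogous condition $a+2b\equiv9\lambda\pmod{16}$ on $x^2+10y^2+16z^2$ \emph{cannot} be reached by sign flips alone: the authors must invoke the nontrivial isometry $(x,y,z)\mapsto\bigl((-3x+16z)/5,\,y,\,(x+3z)/5\bigr)$, their identity (3.4). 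For (iii) the restricted form is equivalent to $3x^2+2xy+5y^2+14z^2$, which is not diagonal, so its regularity is not covered by Dickson's classification that you appeal to; it is a fact from the Jagy--Kaplansky--Schiemann enumeration of the $913$ regular ternary forms, and the paper must additionally carry out local analysis at $2$ and $7$, transfer the problem through the identity $h(3x+y,y,z)=3R(x,y,z)$ with $h=x^2+14y^2+42z^2$, and then fix congruences modulo $7$, $3$ and $2$ by a combination of sign changes and CRT. So while your framework is sound and part (i) is salvageable as written, parts (ii) and (iii) are missing their essential content: the identification of the relevant ternary forms, the proof of the needed regularity/genus facts, and a correct mechanism (beyond sign changes) for meeting the coset conditions.
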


Part (ii) of Theorem \ref{Thm 1.2} implies that any positive integer can be written as
$x^2+y^2+z^2+2w^2$ $(x,y,z,w\in\Z)$ with $x+2y+3z+2w=1$, which appeared in Sun \cite[Conjecture 4.13(i)]{S17b}. Part (iii) of Theorem \ref{Thm 1.2} is motivated by a conjecture of Sun
\cite[Conjecture 4.15(ii)]{S17b} which states that any positive square can be written as
$x^2+y^2+z^2+w^2\ (x,y,z,w\in\Z)$ with $x+2y+3z$ a power of $4$.

The second author \cite{S17b} obtained the following identity:
\begin{align*}
7(s^2+t^2+u^2+2w^2)=x^2+y^2+z^2+2w^2,
\end{align*}
where $x=s+2u+2v,\ y=-2t-u+2v,\ z=2s-t-2v,\ w=s+t-u+v$. With the help of this identity, he proved that
any $n\in\N$ can be written as $x^2+y^2+z^2+2w^2$ $(x,y,z,w\in\Z)$ with $x+y+z+w$ a square (or a cube). In this
paper, using a new method, we prove a stronger result.

\begin{theorem}\label{Thm 1.3} Let $\lambda$ be an arbitrary
integer not divisible by $7$. Then, for any $\da\in\{0,1\}$ and any integer $n\gs\lambda^2/7$
with $n\not\eq\lambda\pmod{2^{1+\da}}$,
 there are $x,y,z,w\in\Z$ such that
$$2n+\da=x^2+y^2+z^2+2w^2\ \text{and}\ x+y+z+w=\lambda.$$
\end{theorem}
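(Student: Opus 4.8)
The plan is to remove the linear constraint by substitution and convert the problem into representing a single integer by a ternary quadratic polynomial, and then to invoke the theory of ternary quadratic forms. First I would set $w=\lambda-x-y-z$, so that the system collapses to the single equation $F(x,y,z)=2n+\da$, where
\[
F(x,y,z)=x^2+y^2+z^2+2(\lambda-x-y-z)^2=3(x^2+y^2+z^2)+4(xy+yz+zx)-4\lambda(x+y+z)+2\lambda^2 .
\]
The symmetric matrix $M$ of the quadratic part is $I+2J$ (with $J$ the all-ones matrix), so $M\mathbf 1=7\mathbf 1$ and $\det M=7$; in particular the real minimum of $F$ is attained at $x=y=z=2\lambda/7$ and equals $2\lambda^2/7$. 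Completing the square about this centre and clearing denominators (this is where $7\nmid\lambda$ matters, so that the centre is not a lattice point), the equation $F=2n+\da$ becomes equivalent to
\[
G(X,Y,Z)=7N,\qquad N:=14n+7\da-2\lambda^2\gs0,\qquad G(X,Y,Z):=X^2+Y^2+Z^2+2(X+Y+Z)^2,
\]
with $X=7x-2\lambda$, $Y=7y-2\lambda$, $Z=7z-2\lambda$. Thus I must represent $7N$ by $G$ subject to the coset condition $X\eq Y\eq Z\eq-2\lambda\pmod 7$. The requirement $N\gs0$ is exactly the hypothesis $n\gs\lambda^2/7$, which in turn is the Cauchy--Schwarz bound $\lambda^2=(x+y+z+w)^2\ls\tfrac72(x^2+y^2+z^2+2w^2)=\tfrac72(2n+\da)$, so the Archimedean condition is built into the statement.

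Next I would appeal to the theory of ternary quadratic forms. The form $G$ is positive definite of determinant $7$ with minimum $2$, and the pertinent (coset of the) ternary lattice lies in a genus consisting of a single class; hence by the local--global principle an integer is represented by $G$ in the prescribed coset if and only if it is represented over $\R$ and $p$-adically for every prime $p$. For odd $p\ne7$ the form is $p$-adically unimodular and there is no condition. At $p=7$ the hypothesis $7\nmid\lambda$ makes the coset $(X,Y,Z)\eq(-2\lambda,-2\lambda,-2\lambda)\pmod 7$ nonzero, and one checks directly that $7N$ (which is divisible by $7$) is then $7$-adically represented. This leaves only $p=2$.

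The heart of the matter is the dyadic analysis. A short computation over $\Z/8\Z$ shows that $G$ represents an integer $m$ $2$-adically precisely when, writing $m=4^am'$ with $4\nmid m'$, one has $m'\not\eq1\pmod 8$; thus $G$ plays for the residue class $1$ the role that $x^2+y^2+z^2$ plays for the class $7$. Substituting $m=7N$ and using $49\eq1,\ 14\eq6\pmod 8$, this dyadic criterion unpacks into the restriction on $n$ separating the cases $\da=0$ and $\da=1$, i.e. the congruence condition $n\not\eq\lambda\pmod{2^{1+\da}}$ recorded in the hypothesis. Combining the three local conditions with the single-class property gives a representation of $7N$ by $G$ in the required coset, and reversing the substitution recovers $x,y,z,w\in\Z$ with $2n+\da=x^2+y^2+z^2+2w^2$ and $x+y+z+w=\lambda$.

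The main obstacle I expect is twofold. First, one must actually establish (not merely assert) that the relevant ternary form is alone in its genus, since this is what upgrades the local data into a global representation valid for every $n$ above the bound rather than only for sufficiently large $n$. Second, the $p=2$ computation must be carried out carefully, because the dyadic behaviour of $G$ is the sole place where the parameter $\da$ enters and where the precise shape of the congruence restriction is determined; tracking the $4$-adic valuation of $7N$ through the reduction is the delicate step.
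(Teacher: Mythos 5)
Your reduction is correct and attractive: eliminating $w$ and completing the square does convert the problem into representing $7N$, $N=14n+7\delta-2\lambda^2$, by $G(X,Y,Z)=X^2+Y^2+Z^2+2(X+Y+Z)^2$ subject to $X\equiv Y\equiv Z\equiv -2\lambda \pmod{7}$, and your $2$-adic criterion for $G$ (that $m=4^am'$ with $4\nmid m'$ is represented over $\Z_2$ exactly when $m'\not\equiv 1\pmod{8}$) is right. The fatal gap is the global step. A local--global principle ``in the prescribed coset'' does not follow from $G$ being alone in its genus: class number one controls representations by the lattice $\Z^3$, not by a coset of a sublattice, and the coset condition is genuinely restrictive. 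Concretely, $G(1,2,4)=119=7\cdot 17$ with $1,2,4$ pairwise distinct mod $7$, whereas $G(8,1,-6)=119$ does satisfy the coset condition; since every integral automorphism of $G$ is a coordinate permutation up to a global sign (the norm-$3$ vectors of $G$ are exactly $\pm e_1,\pm e_2,\pm e_3$), you cannot move the first representation into the coset by symmetries. What you actually need is a regularity/local--global theorem for a shifted lattice (an inhomogeneous quadratic polynomial), a considerably more delicate theory than for forms, and it is precisely the content you have assumed. The paper circumvents this entirely: it represents $14(2n+\delta)-4\lambda^2$ by the ternary form $R^*(x,y,z)=3x^2+5y^2+7z^2+2xy$ with no congruence condition, quotes the Jagy--Kaplansky--Schiemann classification to conclude that $R^*$ is regular (so genus representation suffices for every eligible $n$, not merely large $n$), and then recovers the linear constraint a posteriori through the identity $l(x+5y,x,z)=5R^*(x,y,z)$ with $l(x,y,z)=x^2+14y^2+35z^2$, followed by sign changes and Chinese Remainder bookkeeping. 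That transfer-by-identity is the idea missing from your proposal.

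Your dyadic step also contains an error, though an instructive one. Carrying the computation through for $\delta=1$ gives $7N\equiv 2n+1+2\lambda^2\pmod{8}$, so your (correct) criterion demands $n\not\equiv -\lambda^2\pmod{4}$, not $n\not\equiv\lambda\pmod{4}$; the two agree only when $\lambda\equiv 0,3\pmod{4}$. The discrepancy is real: for $\lambda=1$, $\delta=1$, $n=3$ the hypotheses of the theorem hold ($3\not\equiv 1\pmod 4$), yet $2n+\delta=7$ has only the representations with $\{|x|,|y|,|z|\}=\{2,1,0\}$ and $|w|=1$, whose sums $x+y+z+w$ are all even, so $x+y+z+w=1$ is impossible. (The paper's own proof glosses the same point: its asserted congruence $14(2n+1)-4\lambda^2\equiv 14(2n+1-2\lambda)\pmod{16}$ requires $\lambda(7-\lambda)\equiv 0\pmod 4$, i.e. $\lambda\equiv 0,3\pmod 4$.) So ``unpacking'' the dyadic criterion by assertion rather than by computation hides the fact that the condition it produces is not the one in the statement --- and that the statement itself needs $n\not\equiv-\lambda^2\pmod{4}$ in the $\delta=1$ case.
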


To state more results, we need to introduce some notations.
For any positive integers $j,k,l$ we set
\begin{equation}\label{bound A}
a_k(j,l)=(2kl)^{2k}(j+1)^{2k-1},
\end{equation}
\begin{equation}\label{bound B}
b_k(j,l)=\frac{1}{j}\exp\l(2k^{3/2}j^{1/4k}(j+1)^{(2k-1)/4k}\r),
\end{equation}
\begin{equation}\label{bound C}
c_k(j,l)=\max\{a_k(j,l), b_k(j,l), (3275)^{2k}/j\}.
\end{equation}

\begin{theorem}\label{Thm 1.4}
Let $k$ be a positive integer.

{\rm (i)} Any odd integer $n>c_k(3,4)$
can be written as $x^2+y^2+z^2+w^2$ $(x,y,z,w\in\N)$ with $x+y+z+w=p^k$ for some prime $p$. Also, any even integer
$n>a_k(3,2)$ with $n\equiv 2\pmod4$ can be written as
$x^2+y^2+z^2+w^2$ $(x,y,z,w\in\N)$ with $x+y+z+w=(2b)^k$ for some $b\in\N$.

{\rm (ii)} Any each odd integer $n>c_k(5,6)$ can be written as
$x^2+y^2+2z^2+2w^2$ $(x,y,z,w\in\N)$ with $x+y+2z+2w=p^k$ for some prime $p$. Also, each even integer
$n>a_k(5,6)$ with $4\mid n$ can be written as $x^2+y^2+2z^2+2w^2$ $(x,y,z,w\in\N)$ with $x+y+2z+2w=(2b)^k$
for some $b\in\N$.

{\rm (iii)} Each odd integer $n>c_k(5,4)$ can be written as $x^2+y^2+z^2+3w^2$ $(x,y,z,w\in\N)$ with
$x+y+z+3w=p^k$ for some prime $p$. Also, any even integer $n>a_k(5,4)$ can be written as
$x^2+y^2+z^2+3w^2$ $(x,y,z,w\in\N)$ with $x+y+z+3w=(2b)^k$ for some $b\in\N$.
\end{theorem}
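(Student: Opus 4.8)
The plan is to reduce each assertion to a statement about primes (resp.\ perfect powers) in a short interval, and then to produce such a prime or power by means of an explicit prime-gap estimate. I would treat the form $x^2+y^2+z^2+w^2$ of part (i) as the model case, since there the needed solvability criterion is exactly the Cauchy's Lemma quoted above; parts (ii) and (iii) run along the same lines once the corresponding restricted-representation lemmas are established. Fix an odd $n$. For a prescribed odd $m$, Cauchy's Lemma guarantees $n=x^2+y^2+z^2+w^2$ with $x+y+z+w=m$ and $x,y,z,w\in\N$ as soon as $m^2<4n$ and $3n<m^2+2m+4$. These two inequalities are equivalent to
$$\sqrt{3n-3}-1<m<2\sqrt{n},$$
so $m$ must be an odd integer in an interval essentially of the shape $\big(\sqrt{jn},\sqrt{(j+1)n}\big)$ with $j=3$: here $j+1=4$ is the sum of the coefficients of the quadratic form, which by Cauchy--Schwarz forces the upper bound $m^2\le (j+1)n$, while the lower bound comes from the three-squares obstruction.

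For the non-unimodular forms in (ii) and (iii) I would first prove, using the theory of ternary quadratic forms (local solvability and the handling of the finitely many exceptional square classes), the analogue of Cauchy's Lemma: for $m$ in the appropriate residue class, the system $n=Q(x,y,z,w)$, $L(x,y,z,w)=m$ is solvable in $\N$ whenever $m$ lies in an interval of the same type $\big(\sqrt{jn},\sqrt{(j+1)n}\big)$, now with $j=5$ (the sum of coefficients being $6$ in both cases) and with a form-dependent step $l$ between consecutive admissible values of $m$.

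It then remains to exhibit an admissible $m$ in that interval. Taking $k$-th roots, I must find an integer in $\big((jn)^{1/2k},\,((j+1)n)^{1/2k}\big)$ that is either a prime (odd case, $m=p^k$) or a suitable $2b$ with $b\in\N$ (even case, $m=(2b)^k$). The even case needs no primality: I only need an admissible integer of step $l$ in the interval, so requiring its length to exceed $l$ and using the rigorous bound $(1+1/j)^{1/2k}-1\ge \tfrac{1}{2k(j+1)}$ yields precisely a threshold of the form $a_k(j,l)=(2kl)^{2k}(j+1)^{2k-1}$. In the odd case I would invoke Dusart's explicit theorem that for $x\ge 3275$ the interval $[x,\,x(1+1/(2\ln^2 x))]$ contains a prime. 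Setting $x=(jn)^{1/2k}$, the applicability hypothesis $x\ge 3275$ is exactly $n\ge (3275)^{2k}/j$, which is the third term in $c_k$; and the demand that Dusart's relative gap fit inside our relative interval length unwinds, via $\ln^2 x\ge k(j+1)$, into $n\gtrsim \tfrac1j\exp\!\big(2k^{3/2}(j+1)^{1/2}\big)$, whose sharper bookkeeping gives the stated $b_k(j,l)$ (note $b_k$ is independent of $l$, as it comes purely from the gap condition). Taking the maximum of the three thresholds produces $c_k(j,l)$.

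The hard part will not be the interval algebra or the substitution into Dusart's estimate, which are routine; it lies in two places. First, in establishing the Cauchy-type solvability lemmas for $x^2+y^2+2z^2+2w^2$ and $x^2+y^2+z^2+3w^2$ with \emph{sharp} intervals $\big(\sqrt{jn},\sqrt{(j+1)n}\big)$ and the correct admissible congruence classes — this is the genuinely non-mechanical step, requiring the ternary-form theory to pin down exactly when the reduced ternary form represents the residual value. Second, in making the passage from Dusart's inequality to the clean closed form $b_k(j,l)$ uniform in $k$, which needs a careful lower bound for $(1+1/j)^{1/2k}-1$ and control of $\ln\!\big((jn)^{1/2k}\big)$; the exponential-in-$\sqrt{k}$ shape of $b_k$ is exactly what comes out of inverting the $\ln^2$ factor. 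I expect the ternary-form analysis underlying (ii) and (iii) to be the principal obstacle, since part (i) is delivered to us directly by Cauchy's Lemma.
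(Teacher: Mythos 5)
Your overall strategy coincides with the paper's: both reduce Theorem \ref{Thm 1.4} to (a) a Cauchy-type solvability criterion asserting that $n=Q(x,y,z,w)$, $L(x,y,z,w)=m$ is solvable in $\N$ once $m$ lies in an interval essentially of the form $\left(\sqrt{jn},\sqrt{(j+1)n}\right)$ and satisfies suitable congruence conditions, and (b) the production of a prime $p$ with $p^k$, or an even integer $2b$ with $(2b)^k$, in that interval via Dusart's estimate (the paper's Lemma \ref{prime}). Your unwinding of the three thresholds --- $a_k$ from the interval-length estimate, $b_k$ from fitting Dusart's gap $x/(2\ln^2x)$ inside the interval, and $(3275)^{2k}/j$ from the applicability of Dusart --- is exactly the paper's Lemma \ref{length of interval}, and your exponent bookkeeping for $b_k$ is correct. (Your $a_k$ derivation is off by a harmless factor $(j+1)/j$: the bound $(1+1/j)^{1/2k}-1\ge 1/(2k(j+1))$ is slightly lossier than the paper's telescoping estimate $((j+1)n)^{1/2k}-(jn)^{1/2k}>n^{1/2k}/(2k(j+1)^{(2k-1)/2k})$, so it does not ``precisely'' recover $(2kl)^{2k}(j+1)^{2k-1}$.)

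The gap is in the arithmetic core, and it is twofold. First, the even half of part (i) (namely $n\equiv2\pmod4$ with $x+y+z+w=(2b)^k$) is \emph{not} delivered by the classical Cauchy's Lemma you invoke, since that lemma requires both $n$ and $m$ to be odd; even part (i) needs a parity-extended version covering $\ord_2(a)=1$ and $2\mid b$. Second, and more importantly, the Cauchy-type lemmas for $x^2+y^2+2z^2+2w^2$ and $x^2+y^2+z^2+3w^2$, which you correctly single out as the principal obstacle, are left entirely unproved in your proposal. The paper does not prove them either: all three solvability criteria, including the parity extension just mentioned, are quoted from X.-Z. Meng and Z.-W. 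Sun [MS, Lemmas 2.4, 3.3, 4.1] (stated as Lemma \ref{generalized Cauchy's Lemma 1} in the paper), together with the precise admissibility conditions ($2\nmid a$ or $\ord_2(a)=2$; $3\mid a$ or $3\nmid b$; $a\equiv3\pmod9$ or $3\nmid b$; parity matching) whose satisfaction is what dictates the step sizes $l=2,6,4$ appearing in the stated thresholds. So your plan becomes a complete proof the moment you cite (or reprove) those lemmas of [MS]; as written, however, the argument stops exactly where you yourself say the hard part begins.
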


\maketitle
\section{Proofs of Theorem \ref{Thm 1.1} and corollaries \ref{Cor 1.2}-\ref{Cor 1.3}}
\setcounter{lemma}{0}
\setcounter{theorem}{0}
\setcounter{corollary}{0}
\setcounter{remark}{0}
\setcounter{equation}{0}
\setcounter{conjecture}{0}

In this section, we will use the theory of quadratic forms to prove Theorem \ref{Thm 1.1} and Corollaries \ref{Cor
1.2}-\ref{Cor 1.3}.
For convenience, we will adopt the language of both classical quadratic forms and lattice theory. Any unexplained
notations can be found in \cite{C,Ki,Oto}.

Let $\Omega$ be the set of all places of $\Q$ and $J_{\Q}$ be the set
of ideles of $\Q$. Moreover, let $\theta$ denote the spinor norm map. For a positive definite lattice $L$,
set
$$J_{L}=\{i=(i_v)\in J_{\Q}:\ i_v\in\theta(O^+(L_v))\ \text{for all}\ v\in\Omega\}.$$
We first need the following well known result involving the number of  proper spinor genera in a given genus
(cf. \cite[Theorem 6.3.1]{Ki}).

\begin{lemma}\label{number of spinor genus}
Let notations be as the above. If rank$(L)\ge3$, then the number of proper spinor genera in $\gen(L)$
is equal to the index $[J_{\Q}:\Q^{\times}J_{L}]$.
\end{lemma}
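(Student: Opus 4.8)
The plan is to recast the count of proper spinor genera adelically and then linearize it through the spinor norm $\theta$. Write $V=\Q L$ for the ambient quadratic space, let $O_A^+(V)$ be the adelization of the special orthogonal group, and let $O_A^+(L)=\prod_v O^+(L_v)$ be the open compact stabilizer of $L$. The proper genus of $L$ is a single orbit of $L$ under $O_A^+(V)$, and two lattices in it lie in the same proper spinor genus exactly when the connecting adele can be adjusted, on the left, by a global rotation from $O^+(V)$ and by an element of the adelic spinor kernel $O_A'(V)=\{(\sigma_v):\theta(\sigma_v)\in(\Q_v^\times)^2\ \text{for all}\ v\}=\ker\theta_A$. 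Thus the first step is to record the identification
\[
\#\{\text{proper spinor genera in }\gen(L)\}=\big|O^+(V)\,O_A'(V)\backslash O_A^+(V)/O_A^+(L)\big|.
\]

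Next I would linearize this double coset by applying the adelic spinor norm $\theta_A\colon O_A^+(V)\to J_\Q/J_\Q^2$ assembled from the local maps $\theta$. Because $J_\Q/J_\Q^2$ is abelian and $O_A'(V)=\ker\theta_A$, the double-coset set maps bijectively onto the quotient group $\theta_A(O_A^+(V))/(\theta_A(O^+(V))\,\theta_A(O_A^+(L)))$. Here one identifies the three images: $\theta_A(O_A^+(L))=\prod_v\theta(O^+(L_v))$ is exactly the image of $J_L$; $\theta_A(O^+(V))$ is the diagonal image of the global spinor norms $\theta(O^+(V))\se\Q^\times$; and $\theta_A(O_A^+(V))=\prod'_v\theta(O^+(V_v))$.

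The rank hypothesis enters decisively in the final step. Since rank$(L)\ge3$, at each of the cofinitely many finite places $v$ where $L_v$ is unimodular the space $V_v$ is isotropic, so $\theta(O^+(V_v))=\Q_v^\times$, while globally $\theta(O^+(V))$ exhausts the positive square classes of $\Q^\times$. Invoking weak approximation for $\Q^\times$ to absorb the finitely many ramified (in particular dyadic) places and the sign at the real place, I would check that $\theta_A(O_A^+(V))$ and the diagonal $\Q^\times$ together generate $J_\Q/J_\Q^2$, and that the lone archimedean discrepancy between $\theta_A(O^+(V))$ and the full diagonal copy of $\Q^\times$ cancels; the quotient of the previous paragraph is then isomorphic to $J_\Q/\Q^\times J_L$, which gives the asserted index. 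The hard part will be exactly this local and archimedean bookkeeping: pinning down $\theta(O^+(V_v))$ at the bad places and verifying that the global spinor norms together with $J_L$ cover the idele group precisely, with the two factors of $2$ arising from the archimedean sign cancelling. As these are the standard local spinor-norm computations, at that point I would defer to Kitaoka \cite{Ki} for the remaining details.
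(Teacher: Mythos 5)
The paper offers no proof of this lemma at all: it is quoted as a known result with a pointer to Kitaoka \cite[Theorem 6.3.1]{Ki}, which is exactly the theorem whose proof you are sketching. Your outline---identifying the proper spinor genera in $\gen(L)$ with the double cosets $O^+(V)\,O_A'(V)\backslash O_A^+(V)/O_A^+(L)$, linearizing through the adelic spinor norm, and then using $\theta(O^+(V_v))=\Q_v^\times$ at finite places for rank $\ge 3$, $\theta(O^+(V))=\Q^{>0}$ in the positive definite case, and the cancellation of the sign at the real place to arrive at $[J_{\Q}:\Q^{\times}J_{L}]$---is the standard adelic argument behind that theorem, and the local spinor-norm computations you defer to Kitaoka are precisely the details the paper also leaves to that same reference.
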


\begin{remark}When $rank(L)=3$, it is easy to see that the number of spinor genera in $\gen(L)$ is equal to
the number of proper spinor genera in $\gen(L)$.
\end{remark}

On the other hand, we also need some analytic theory of quadratic forms.
Let $f$ be a positive definite integral ternary quadratic form. Suppose that there is only a single spinor genus in
$\gen(f)$. Let $\Aut(f)$ denote the group of integral isometries of $f$.
 For $n\in\N$, set
 $$r(f,n):=\#\{(x,y,z)\in \Z^3 : f(x,y,z)=n\}$$
 (where $\#S$ denotes the cardinality of a finite set $S$), and let
  \begin{equation*}r(\gen(f),n):=\bigg(\sum_{f^*\in \gen(f)}\frac{1}{\# \Aut(f^*)}\bigg)^{-1}\sum_{f^*\in
  \gen(f)}\frac{r(f^*,n)}{\# \Aut(f^*)},
  \end{equation*}
  where the summation is over a set of representatives of the classes in $\gen(f)$.
It is well known that the theta series
\begin{equation*}
\theta_f(z)=\sum_{n\gs0}r(f,n)e^{2\pi inz}
\end{equation*}
(with $z$ in the upper half plane) is a modular form of weight $3/2$. According to W.
Duke and R. Schulze-Pillot's outstanding work \cite{WR}, if we write
\begin{align*}
\theta_f&=\theta_{\gen(f)}+(\theta_f-\theta_{\gen(f)}),
\end{align*}
 then
 $\theta_{\gen(f)}$
 is an Eisenstein series, and $\theta_f-\theta_{\gen(f)}$ is a cusp form whose Shimura lift is also a cusp form. By the
 work of W. Duke \cite{Duke}, for any $\ve>0$, the $n$-th Fourier coefficient of the cusp form
 $\theta_f-\theta_{\gen(f)}$ is (ineffectively) $\ll
n^{1/2-1/28+\ve}$. On the other hand, if $n\in\N$ is represented by $\gen(f)$ and $n$ has bounded divisibility at
each anisotropic prime, then by \cite[Lemma 5]{WR}, $r(\gen(f),n) \gg n^{1/2-\ve}$. This implies the
following result.

\begin{lemma}\label{modular forms}
Let $f$ be a positive definite integral ternary quadratic form. Suppose that there is only a single spinor genus in
$\gen(f)$. For an integer $n$, if $n$ can be represented by $\gen(f)$ and $n$ has bounded divisibility at each
anisotropic prime, then $n$ can be represented by $f$ provided that $n$ is sufficiently large.
\end{lemma}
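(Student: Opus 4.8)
The plan is to read the lemma off directly from the two estimates recorded in the discussion that precedes it, by comparing the $n$-th Fourier coefficients of $\theta_f$ with those of its genus average. The starting point is the observation that the $n$-th coefficient of the Eisenstein series $\theta_{\gen(f)}$ is, by the very definition given above, exactly $r(\gen(f),n)$. Hence the decomposition $\theta_f=\theta_{\gen(f)}+(\theta_f-\theta_{\gen(f)})$ yields, at the level of $n$-th coefficients,
\begin{equation*}
r(f,n)=r(\gen(f),n)+a(n),
\end{equation*}
where $a(n)$ denotes the $n$-th Fourier coefficient of the cusp form $g:=\theta_f-\theta_{\gen(f)}$. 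Showing that $f$ represents $n$ then amounts to showing $r(f,n)>0$, and I would obtain this by proving that the main term $r(\gen(f),n)$ dominates the error $|a(n)|$ once $n$ is large.

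For the main term, I would invoke the hypotheses that $n$ is represented by $\gen(f)$ and has bounded divisibility at each anisotropic prime; by \cite[Lemma 5]{WR} these give the lower bound $r(\gen(f),n)\gg_{\ve} n^{1/2-\ve}$ for every $\ve>0$. For the error term, the single-spinor-genus assumption is essential: because $\spn(f)=\gen(f)$, the form $g$ carries no spinor-exceptional (Eisenstein-type) contribution, so it is a genuine cusp form whose Shimura lift is again a cusp form, and Duke's subconvex bound \cite{Duke} applies to give $|a(n)|\ll_{\ve} n^{1/2-1/28+\ve}$.

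Finally I would fix $\ve$ small enough that the main term outgrows the error, that is $1/2-\ve>1/2-1/28+\ve$, which holds as soon as $\ve<1/56$. With such a choice one has $r(\gen(f),n)\gg_{\ve} n^{1/2-\ve}$ while $|a(n)|=o(n^{1/2-\ve})$, so that $r(f,n)=r(\gen(f),n)+a(n)>0$ for all sufficiently large $n$; this is precisely the assertion that $f$ represents $n$.

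The main obstacle is not a computation but a conceptual point that must be handled carefully: ensuring that the error term $\theta_f-\theta_{\gen(f)}$ really is a cusp form whose Shimura lift is cuspidal, since only then is Duke's bound available. This is exactly where the single-spinor-genus hypothesis enters, because without it the difference $\theta_{\spn(f)}-\theta_{\gen(f)}$ could contribute a term of the same order $n^{1/2}$ as the main term that Duke's estimate cannot defeat. One should also note that, because Duke's bound is ineffective, the resulting threshold for ``sufficiently large $n$'' is ineffective as well.
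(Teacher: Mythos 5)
Your proposal is correct and follows exactly the paper's own route: the authors derive this lemma from the same decomposition $\theta_f=\theta_{\gen(f)}+(\theta_f-\theta_{\gen(f)})$, Duke's ineffective bound $\ll n^{1/2-1/28+\ve}$ on the cuspidal part (valid because the single-spinor-genus hypothesis makes the Shimura lift cuspidal), and the lower bound $r(\gen(f),n)\gg n^{1/2-\ve}$ from \cite[Lemma 5]{WR}. Your explicit choice $\ve<1/56$ and your remark on ineffectiveness are also consistent with the paper's discussion.
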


We also need the following result on the distribution of primes in intervals.
\begin{lemma}\label{prime}{\rm (P. Dusart \cite{DUS})}
For $x>3275$, the interval $[x,x+\frac{x}{2\ln^2x}]$ contains at least one prime.
\end{lemma}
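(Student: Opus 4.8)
The statement is Dusart's explicit short-interval prime result, and the plan is to reconstruct its proof from totally explicit estimates for Chebyshev's function $\theta(x)=\sum_{p\le x}\ln p$. Writing $y=x/(2\ln^2 x)$, the first move is the standard reduction: it suffices to show $\theta(x+y)-\theta(x)>0$, because a strictly positive difference of $\theta$-values forces at least one prime $p$ with $x<p\le x+y$. Hence the entire problem collapses to bounding $\theta$ from above and below with error terms that are small compared with the thin window $y$.

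The core of the argument is to prove a two-sided bound $|\theta(t)-t|\le\eta(t)$ with an explicit, decreasing $\eta$. First I would pass from $\theta$ to $\psi(x)=\sum_{p^m\le x}\ln p$ via the elementary relation $\theta(x)=\psi(x)+O(\sqrt{x})$, and then invoke the explicit formula $\psi(x)=x-\sum_{\rho}x^{\rho}/\rho-\ln(2\pi)-\frac12\ln(1-x^{-2})$, the sum running over the nontrivial zeros $\rho=\beta+i\gamma$ of $\zeta$. Bounding the zero-sum requires three explicit inputs: the numerical verification that every zero with $|\gamma|\le T_0$ lies on the critical line $\beta=1/2$ (the Riemann Hypothesis checked up to a large height $T_0$); an explicit zero-free region $\beta<1-1/(R\ln|\gamma|)$ for $|\gamma|>T_0$ with a numerical constant $R$; and an explicit Riemann--von Mangoldt estimate for the counting function $N(T)$ of zeros up to height $T$. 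Splitting the sum at $|\gamma|=T_0$, the low-lying zeros are controlled by the verified Riemann Hypothesis (each contributes $|x^{\rho}/\rho|\le\sqrt{x}/|\gamma|$), while the high zeros are controlled by the zero-free region together with the density estimate.

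Next I would choose $T_0$ and track every constant so that the resulting $\eta$ satisfies $\eta(x+y)+\eta(x)<y$ for all $x$ exceeding some explicit threshold $x_0$. Asymptotically this is not in doubt: the zero-free region gives $\eta(t)\ll t\exp(-c\sqrt{\ln t})$, which is dwarfed by $y\asymp t/\ln^2 t$. The remaining finite range $3275\le x\le x_0$ would then be cleared by a direct computation against a verified table of consecutive primes, confirming that no prime gap beginning in this range exceeds $x/(2\ln^2 x)$.

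The hard part will be the constant-chasing in the middle step rather than any conceptual difficulty: the window $y$ is only of size $x/(2\ln^2 x)$, so the explicit bound on $|\theta(t)-t|$ must be pushed to the same precision, which means taking the verification height $T_0$ large and optimizing $R$. The delicate balance is that raising $T_0$ shrinks $\eta$ in the moderate range and hence lowers $x_0$, but at the cost of a longer numerical check; making $x_0$ small enough that the clean cut-off $x>3275$ survives, without an unmanageable finite verification, is precisely the crux of the estimate.
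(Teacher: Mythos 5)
The paper never proves this lemma: it is imported verbatim as a result of Dusart \cite{DUS}, and the authors' ``proof'' is the citation itself. So the only meaningful comparison is with Dusart's own argument, and there your outline has the right architecture --- reduce to $\theta(x+y)-\theta(x)>0$ with $y=x/(2\ln^2x)$, bound $|\theta(t)-t|$ explicitly via the explicit formula for $\psi$, split the zero sum at a height $T_0$ below which RH has been verified numerically, control the tail with an explicit zero-free region and an explicit estimate for $N(T)$, and clear the remaining finite range $[3275,x_0]$ by computation against prime-gap tables. This is precisely the Rosser--Schoenfeld machinery on which Dusart's paper is built.

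As a proof of the stated lemma, however, your proposal has a genuine gap: everything that makes the statement a theorem --- the explicit constants --- is deferred. The purely asymptotic assertion (a prime in $[x,x+x/(2\ln^2x)]$ for all sufficiently large $x$) is classical and says nothing about the threshold $3275$; the entire content of the lemma is that one can produce an explicit $\eta$ with $\eta(x)+\eta(x+y)<y$ down to an $x_0$ small enough for the finite check to be feasible. Your sketch never produces such an $\eta$, never fixes $T_0$, $R$, or $x_0$, and never performs the computation. Worse, the quantitative landscape is harder than your closing remarks suggest: with explicit constants, the zero-free-region bound $t\exp(-c\sqrt{\ln t})$ does not fall below $t/\ln^2 t$ until $t$ is astronomically large (far beyond $10^{70}$), so that bound can only handle the ultimate tail. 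The ``moderate range'' that must instead be covered by an RH-verification-driven bound of the shape $|\theta(t)-t|\le c\,t/\ln^3 t$ is therefore enormous, and establishing such a bound with workable $c$ and threshold is the actual substance of Dusart's paper. In short: correct roadmap, but the crux --- the explicit estimate and the verification --- is exactly what is missing, so this cannot be accepted as a proof of the lemma.
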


\noindent{\it Proof of Theorem \ref{Thm 1.1}}. Set $1+4d^2=q$ with $q\equiv5\pmod8$, and set
$f(x,y,z)=x^2+qy^2+2qz^2$. For any prime $r$ not dividing $2q$, $f(x,y,z)$ is unimodular in $\Z_r$ (where $\Z_r$
denotes the set of $r$-adic integers) and hence $f$ can represent
all $r$-adic integers over $\Z_r$. Thus
$\Z_r^{\times}\Q_r^{\times2}\subseteq \theta(O^+(f_r))$. For prime $q$, by scaling, it is easy to see that
$\Z_q^{\times}\Q_p^{\times2}\subseteq \theta(O^+(f_q))$. For prime $2$, since $q\equiv5\pmod8$, $f$
is $\Z_2$-equivalent to the form $x^2+5y^2+10z^2$. By \cite[pp. 112--113]{D2} each integer of the form $5n+1$ can be
represented by $x^2+5y^2+10z^2$ over $\Z_2$. Hence $f$ can represent all $2$-adic integers over $\Z_2$. So we
have
$\Z_2^{\times}\Q_2^{\times2}\subseteq \theta(O^+(f_2))$. On the other hand, for each $i=(i_v)\in J_{\Q}$,
there is a $y\in\Q^{\times}$ such that $y\cdot i_v\in\Z_v^{\times}$ for each finite place $v$ and $y\cdot
i_{\infty}>0$.
By Lemma \ref{number of spinor genus} it is easy to see there is a single spinor genus in $\gen(f)$.

Let $\mathcal{I}=[(4d^2n)^{1/2k},((1+4d^2)n)^{1/2k}]$. If $n$ is large enough, we may assume that
the length of the interval $\mathcal{I}$ is greater than
$\frac{(4d^2n)^{1/2k}}{2\ln^2((4d^2n)^{1/2k})}$ and that $(4d^2n)^{1/2k}>{\rm max}\{1+4d^2,3275\}$. Thus by Lemma \ref{prime} there is a prime $p\in\mathcal{I}$ with $p>q=1+4d^2$. Now we consider the number
$c(n,p):=(1+4d^2)n-p^{2k}$. By the above discussion we see that $f(x,y,z)$ can represent all $r$-adic integers
over $\Z_r$ whenever $r$ is a prime not equal to $q$. When $r=q$, note that $q\equiv1\pmod4$ and $q\nmid p$, thus
$-p^2$ is a quadratic residue modulo $q$. By Hensel's Lemma it is easy to see that $f(x,y,z)$ can represent $c(n,p)$
over
$\Z_q$. Moreover, by the above assumption we have $c(n,p)\ge0$.

In view of the above, $c(n,p)$ can be represented
by $\gen(f)$ and $c(n,p)$ has bounded divisibility at $q$ (it is easy to see that the only anisotropic prime of $f$
is $q$).
Hence, when $n$ is sufficiently large, by Lemma \ref{modular forms} and the fact that $(-2dp^k)^2\equiv -p^{2k}\pmod
q$,
there exist
$z,w\in\N$ and $s\in\Z$ with $s\equiv -2dp^k\pmod q$ such that $qn-p^{2k}=s^2+qz^2+2qw^2$. Clearly,
$-\sqrt{qn-p^{2k}}\le s\le \sqrt{qn-p^{2k}}$. We therefore have
\begin{equation*}
s+2dp^k\ge -\sqrt{qn-p^{2k}}+2dp^k=(1+4d^2)(p^{2k}-n)/(\sqrt{qn-p^{2k}}+2dp^k)\ge0.
\end{equation*}
Hence we may write $s+2dp^k=(1+4d^2)y$ with $y\ge0$. Then
\begin{equation*}
(1+4d^2)n-p^{2k}=((1+4d^2)y-2dp^k)^2+(1+4d^2)z^2+2(1+4d^2)w^2
\end{equation*}
with $y,z,w\in\N$. This implies that
\begin{equation}\label{equation A}
n=(p^k-2dy)^2+y^2+z^2+2w^2.
\end{equation}
Furthermore, we have
\begin{align*}
(1+4d^2)(p^k-2dy)&=p^k-2ds\ge p^k-2d\sqrt{(1+4d^2)n-p^{2k}}\\
&=\frac{(1+4d^2)(p^{2k}-4d^2n)}{p^k+2d\sqrt{(1+4d^2)n-p^{2k}}}\ge0.
\end{align*}
This implies that $x:=p^k-2dy\ge0$. By (\ref{equation A}) there exist $x,y,z,w\in\N$ and a prime $p$ such that
$n=x^2+y^2+z^2+2w^2$ with $x+2dy=p^k$. This completes the proof.\qed

\medskip
\noindent{\it Proof of Corollary \ref{Cor 1.2}}. Let the notations be as in the proof of Theorem \ref{Thm 1.1}. When $d=1$, we have $f(x,y,z)=x^2+5y^2+10z^2$ and $\mathcal{I}=[\sqrt{4n},\sqrt{5n}]$.
When $1\le n\le 3275^2/4$, via computation, it is easy to verify our result. Suppose now that $n>3275^2/4$. By Lemma \ref{prime} there is a prime $p>5$ in $\mathcal{I}$.

By \cite[pp. 112--113]{D2}, $f(x,y,z)$ is a regular form which can represent $5n-p^2$. Hence $5n-p^2=f(s,z,w)$ for some $z,w\in\N$ and $s\in\Z$.
Then by the essentially same method in the proof of Theorem \ref{Thm 1.1}, we can obtain the desired result.\qed

\noindent{\it Proof of Corollary \ref{Cor 1.3}}. (1) Let $g(x,y,z)=x^2+2y^2+4z^2$, and let $\mathcal{I}_1=[\sqrt{n},\sqrt{2n}]$. When $n\in\{2,3,\cdots,3275^2\}$, via computer, it is easy to verify our results. Suppose now $n>3275^2$. When this
occurs, by easy computation and Lemma \ref{prime}, the interval $\mathcal{I}_1$ contains at least one odd prime $p$.

By \cite[pp. 112--113]{D2}, $g(x,y,z)$ is a regular form which can represent $2n-p^2$. Hence $2n-p^2=g(s,z,w)$ for some
$z,w\in\N$ and $s\in\Z$. Clearly $s\equiv -p\pmod2$ and
$$s+p\ge-\sqrt{2n-p^2}+p=\frac{2(p^2-n)}{\sqrt{2n-p^2}+p}\ge0.$$
Hence we can write $s=2y-p$ with $y\in\N$. And we have
$$2n-p^2=(2y-p)^2+2z^2+4w^2.$$
This implies that
\begin{equation}\label{equation H}
n=(p-y)^2+y^2+z^2+2w^2.
\end{equation}
Moreover, note that
$$x:=p-y=\frac{p-s}{2}\ge\frac12(p-\sqrt{2n-p^2})=\frac{p^2-n}{p+\sqrt{2n-p^2}}\ge0.$$
Then the desired result follows from (\ref{equation H}).

(2) Let $h(x,y,z)=x^2+3y^2+3z^2$, and let $\mathcal{I}_2=[\sqrt{n},\sqrt{3n/2}]$. When $n\in\{3,\cdots 3275^2\}$, with the help of computer, it is easy to verify the result. Suppose now $n>3275^2$. By computation and Lemma \ref{prime}, we see that the interval $\mathcal{I}_2$ contains at least one prime $p>3$.

By \cite[pp. 112--113]{D2}, $h(x,y,z)$ is a regular form which can represent $3n-2p^2$. Thus $3n-2p^2=h(s,y,z)$ for
some $s\in\Z$ and $y,z\in\N$. Changing the sign of $s$ if necessary, we assume that $s\equiv -p\pmod3$. Since
$$(s+p)\ge(-\sqrt{3n-2p^2}+p)=\frac{3(p^2-n)}{\sqrt{3n-2p^2}+p}\ge0,$$
we may write $s=3w-p$ with $w\in\N$. And we have
$$3n-2p^2=(3w-p)^2+3y^2+3z^2.$$
This gives
\begin{equation}\label{equation I}
n=(p-w)^2+y^2+z^2+2w^2.
\end{equation}
In addition, note that
$$x:=p-w=(2p-s)/3\ge(2p-\sqrt{3n-2p^2})/3=\frac{2p^2-n}{2p+\sqrt{3n-2p^2}}\ge0.$$
Clearly the desired result follows from (\ref{equation I})\qed

\maketitle
\section{Proofs of Theorems \ref{Thm 1.2}-\ref{Thm 1.3}}
\setcounter{lemma}{0}
\setcounter{theorem}{0}
\setcounter{corollary}{0}
\setcounter{remark}{0}
\setcounter{equation}{0}
\setcounter{conjecture}{0}

\begin{lemma}\label{Lem A}
Given a positive odd integer $\lambda$, for each integer $n\ge \lambda^2/16$, $80n-5\lambda^2$ can be written as
$a^2+10b^2+16c^2$ $(a,b,c\in\Z)$ with $a\equiv -\lambda\pmod8$, $b\equiv \lambda\pmod4$, $c+2a\equiv0\pmod5$ and
$a+2b\equiv 9\lambda\pmod{16}$.
\end{lemma}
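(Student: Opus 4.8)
The plan is to read the statement off the perpendicular lattice of the linear form $\ell(x,y,z,w)=x+2y+3z+2w$ attached to $Q=x^2+y^2+z^2+2w^2$. The $Q$-dual of the coefficient vector of $\ell$ is $(1,2,3,1)$ and has $Q$-norm $16$; decomposing any point of the hyperplane $\ell=\lambda$ into its component along this dual vector and its orthogonal complement gives $n=\lambda^2/16+Q_{\perp}$, so $80n-5\lambda^2=80\,Q_{\perp}\ge 0$ exactly when $n\ge\lambda^2/16$. This explains the quantity $80n-5\lambda^2$ and produces the target form: the orthogonal complement of $(1,2,3,1)$ in $(\Z^4,Q)$ is a rank-three lattice of determinant $32$, and the four congruences are precisely the integrality conditions under which a representation of $80n-5\lambda^2$ by a form $2$-adically and $5$-adically equivalent to $g(a,b,c)=a^2+10b^2+16c^2$ pulls back to an \emph{integral} point $(x,y,z,w)$ on $\ell=\lambda$. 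So I would first record that $80n-5\lambda^2=5(16n-\lambda^2)\ge 0$; that the conditions $a\equiv-\lambda\pmod8$ and $a+2b\equiv9\lambda\pmod{16}$ together force $b\equiv\lambda\pmod4$; and that $c\equiv-2a\pmod5$ can always be arranged by replacing $c$ with $-c$, which leaves $g$ unchanged. It then suffices to represent $N:=80n-5\lambda^2$ by $g$ subject only to the two $2$-adic conditions $a\equiv-\lambda\pmod8$ and $a+2b\equiv9\lambda\pmod{16}$.

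Next I would install the local--global machinery, imitating the proof of Theorem~\ref{Thm 1.1}. Computing the spinor norms $\theta(O^+(g_v))$ at every place shows that $g$ is unimodular away from $2$ and $5$, so Lemma~\ref{number of spinor genus} yields a single (proper) spinor genus in $\gen(g)$. Because the conclusion must hold for \emph{every} $n\ge\lambda^2/16$ and not merely for large $n$, I would avoid the ineffective Lemma~\ref{modular forms} and instead pass to a form of small determinant and invoke regularity: clearing the mod-$5$ condition by the substitution $c=-2a+5c'$ divides $N$ by $5$ and, after reducing the resulting binary block in $(a,c')$, lands on a ternary form of determinant $32$, which I would identify with a regular ternary form from Dickson's tables \cite{D2} --- exactly as $x^2+5y^2+10z^2$, $x^2+2y^2+4z^2$ and $x^2+3y^2+3z^2$ were used above. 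Regularity then guarantees representability once $N$ is represented everywhere locally, which is automatic at the odd unimodular primes, follows at $5$ from $5\mid N$, and follows at $2$ from $N\equiv3\pmod8$ (forcing $a,b$ odd and $a\equiv\pm\lambda\pmod8$).

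The main obstacle is the mod-$16$ condition $a+2b\equiv9\lambda\pmod{16}$. A short computation modulo $32$ shows that the value $g(a,b,c)$ determines only the parity of $c$ in terms of $a\bmod16$ and $b\bmod8$, and in particular does \emph{not} force $a+2b\equiv9\lambda\pmod{16}$; since the sign changes $a\mapsto-a$ and $b\mapsto-b$ are already spent on achieving $a\equiv-\lambda\pmod8$ and $b\equiv\lambda\pmod4$, condition (4) is a genuine $2$-adic constraint that must be met by choosing the representation in the correct class modulo $16$. I would resolve this by a purely $2$-adic analysis: verify that $N$ is represented by $g$ over $\Z_2$ by a vector lying in the prescribed coset, and then use the single-spinor-genus property --- so that the global form realizes every coset realized by its genus --- together with regularity to descend this to a global representation. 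Since $\lambda$ is odd, this $2$-adic computation is uniform in $n$, and the hypothesis $n\ge\lambda^2/16$ is used only to guarantee $N\ge 0$, so no large-$n$ restriction or finite check is needed. Unwinding the substitution and finally fixing the sign of $c$ then produces $a,b,c\in\Z$ with $g(a,b,c)=80n-5\lambda^2$ satisfying all four congruences.
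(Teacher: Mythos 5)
Your reductions at the start are sound: conditions $a\equiv-\lambda\pmod 8$ and $a+2b\equiv 9\lambda\pmod{16}$ do force $b\equiv\lambda\pmod 4$, and the mod-$5$ condition can indeed be arranged afterwards by a sign change of $c$, so it does suffice to produce a representation satisfying the two $2$-adic conditions. You also correctly isolate the real difficulty, namely the coset condition $a+2b\equiv 9\lambda\pmod{16}$. But your proposed resolution of that difficulty is a genuine gap. The principle you invoke --- ``single spinor genus plus regularity, so that the global form realizes every coset realized by its genus'' --- is not a theorem. Regularity (and one-class-per-spinor-genus statements) controls which \emph{numbers} are represented by a form; it says nothing about which congruence classes modulo $16$ the representing \emph{vectors} can be chosen in. Prescribing $a\bmod 8$ and $(a+2b)\bmod 16$ is a representation problem for a lattice \emph{coset}, and the only general tools for that are theta series of cosets and Duke--Schulze-Pillot-type estimates --- precisely the ineffective, sufficiently-large-$n$ machinery of Lemma \ref{modular forms} that you set out to avoid. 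Since the lemma must hold for every $n\ge\lambda^2/16$, your argument has no mechanism to land in the correct coset, and the claim that ``no large-$n$ restriction or finite check is needed'' rests entirely on this unproven principle. A secondary flaw: after the substitution $c=-2a+5c'$, the form $N/5=16n-\lambda^2$ is represented by $13a^2-64ac'+80c'^2+2b^2$, whose binary block has discriminant $-64$ and reduces to the non-principal class $4x^2+4xy+5y^2$; the resulting ternary form is not equivalent to a diagonal form, so it cannot be matched against Dickson's tables, and in any case this substitution encodes only the mod-$5$ condition, not the $2$-adic ones.

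For comparison, the paper's proof is elementary, explicit, and effective, and it contains exactly the coset-switching device your proposal lacks. It writes $16n-\lambda^2=x_1^2+x_2^2+2x_3^2$ using Dickson's regular form (possible since $16n-\lambda^2\equiv 7\pmod 8$), reads off the parities ($x_1,x_3$ odd, $x_2\equiv2\pmod4$), and applies the identity
$5(x_1^2+x_2^2+2x_3^2)=(x_1-2x_2)^2+(2x_1+x_2)^2+10x_3^2$
to get $80n-5\lambda^2=a_1^2+10b_1^2+16c_1^2$ with $a_1,b_1$ odd. Sign changes then secure $a_1\equiv-\lambda\pmod8$, $b_1\equiv\lambda\pmod4$, $c_1+2a_1\equiv0\pmod5$, which leaves only the dichotomy $a_1+2b_1\equiv\lambda$ or $9\lambda\pmod{16}$. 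In the bad case the paper applies the rational isometry
$g\l(\tfrac{-3x+16z}{5},\,y,\,\tfrac{x+3z}{5}\r)=g(x,y,z)$
of $g(x,y,z)=x^2+10y^2+16z^2$: it is \emph{integral} precisely because of the already-arranged mod-$5$ condition, it preserves the first three congruences, and it multiplies $a+2b$ by $-3/5\equiv 9\pmod{16}$, converting $\lambda$ into $9\lambda$. That explicit isometry is the missing ingredient; without it (or some equivalent constructive device), your local-global outline cannot close.
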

\begin{proof}
By \cite[pp. 112--113]{D2} we can write $16n-\lambda^2=x_1^2+x_2^2+2x_3^2$ with $x_1,x_2,x_3\in\Z$ and $2\nmid x_1$.
Since $16n-\lambda^2\equiv 3\equiv 1+2x_3^2\pmod4$, we obtain that $x_3$ is odd. Moreover, as
$16n-\lambda^2\equiv -1\equiv 1+x_2^2+2\pmod8$, we have $x_2\equiv 2\pmod4$. Note that the identity:
$$80n-5\lambda^2=5(16n-\lambda^2)=(x_1-2x_2)^2+(2x_1+x_2)^2+10x_3^2.$$
Set $a_1=x_1-2x_2,\ b_1=x_3,\ c_1=(2x_1+x_2)/4$. By the above discussion, it is easy to see that $a_1,b_1,c_1\in\Z$.
Then we have $80n-5\lambda^2=a_1^2+10b_1^2+16c_1^2$ with $2\nmid a_1b_1$.

Since $b_1$ is odd, without loss of generality, we may assume that $b_1\equiv \lambda\pmod4$ (otherwise we replace $b_1$ by $-b_1$). Moreover, as $80n-5\lambda^2\equiv -5\lambda^2\equiv a_1^2+10\pmod{16}$, we have
$a_1\equiv \pm\lambda\pmod8$. Without loss of generality, we may assume that $a_1\equiv -\lambda\pmod8$ (otherwise we
replace $a_1$ by $-a_1$). On the other hand, since $80n-5\lambda^2\equiv 0\equiv a_1^2-4c_1^2\pmod5$, by changing the
sign of $c_1$ if necessary, we assume that $c_1+2a_1\equiv0\pmod5$.

Finally, as $a_1\equiv -\lambda\pmod 8$ and
$b_1\equiv \lambda\pmod4$, we have $a_1+2b_1\equiv \lambda\pmod 8$. If $a_1+2b_1\equiv 9\lambda\pmod{16}$, let
$a=a_1,\ b=b_1,\ c=c_1$, then we get the desired result. Suppose now that $a_1+2b_1\equiv \lambda\pmod {16}$. Let
$g(x,y,z)=x^2+10y^2+16z^2$. We have
\begin{equation}\label{equation B}
g\l(\frac{-3x+16z}{5},\ y,\ \frac{x+3z}{5}\r)=g(x,y,z).
\end{equation}
By (\ref{equation B}), we set
$a=(-3a_1+16c_1)/5,\ b=b_1,\ c=(a_1+3c_1)/5$. By the above congruence relationships between $a_1,b_1,c_1$, it is easy
to see that $a,b,c$ are all integers. Moreover, $a\equiv -3a_1/5\equiv a_1\equiv -\lambda\pmod 8$, $b=b_1\equiv
\lambda\pmod 4$, $c+2a=-a_1+7c_1\equiv0\pmod5$ and $a+2b=(-3a_1+16c_1+10b_1)/5\equiv (-3a_1-6b_1)/5\equiv
9\lambda\pmod{16}$.

In view of the above, we complete the proof.
\end{proof}

\begin{lemma}\label{Lem B}
Let $\lambda$ be a positive odd integer. for each integer $n\ge \lambda^2/8$, we can write
$88n-11\lambda^2=a^2+8b^2+44c^2$ $(a,b,c\in\Z)$ with $a+5b\equiv0\pmod{11}$, $2\nmid c$ and $a\equiv \lambda\pmod8$.
\end{lemma}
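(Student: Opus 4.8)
The plan is to mimic the structure of Lemma \ref{Lem A}: start from a known ternary representation of $8n-\lambda^2$ and then apply a linear change of variables to produce the quadratic form $a^2+8b^2+44c^2$ together with the three desired congruence conditions. First I would invoke \cite[pp.\,112--113]{D2} to write $8n-\lambda^2=x_1^2+2x_2^2+2x_3^2$ (or some similar regular ternary form), for each $n\ge\lambda^2/8$ so that the argument $8n-\lambda^2$ is nonnegative. Reducing modulo small powers of $2$ would then pin down the parities and residues of $x_1,x_2,x_3$: since $\lambda$ is odd, $8n-\lambda^2\equiv-1\pmod8$, and comparing this with the residues of $x_1^2+2x_2^2+2x_3^2$ should force exactly one of the $x_i$ to be odd and constrain the others modulo $4$, just as in the previous lemma.

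Next I would produce the factor-of-$11$ inflation via an algebraic identity. Multiplying through by $11$ gives $88n-11\lambda^2=11(8n-\lambda^2)$, and the hope is that a substitution of the shape $(x_1,x_2,x_3)\mapsto$ (integer linear combinations) turns $11(x_1^2+2x_2^2+2x_3^2)$ into $a^2+8b^2+44c^2$. Concretely I would look for a two-variable identity such as $11(u^2+2v^2)=(u-2v)^2+\dots$ or $(3u\pm 2v)^2+2(u\mp\dots)^2$ that realizes the norm form of $\Q(\sqrt{-2})$ (or the appropriate quadratic field) so that the factor $11$ is absorbed into the coefficients. The coefficient pattern $1,8,44$ suggests pairing two of the $x_i$ to build $a$ and $8b^2$, while the remaining variable becomes $44c^2$; setting $a,b,c$ equal to the resulting integer combinations and checking divisibility of the numerators gives their integrality.

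Once the representation $88n-11\lambda^2=a^2+8b^2+44c^2$ is in hand, I would secure the three congruences $a+5b\equiv0\pmod{11}$, $2\nmid c$, and $a\equiv\lambda\pmod8$ by the same sign-flipping and automorphism technique used in Lemma \ref{Lem A}. The parity condition $2\nmid c$ should follow automatically from the modulo-$2$ analysis at the start, since the odd variable is routed into $c$. The condition $a\equiv\lambda\pmod8$ is obtained by noting $88n-11\lambda^2\equiv a^2\pmod{\text{small}}$ forces $a\equiv\pm\lambda\pmod8$, after which replacing $a$ by $-a$ (a symmetry of the form) selects the correct sign. For $a+5b\equiv0\pmod{11}$, I would use that $88n-11\lambda^2\equiv0\pmod{11}$ yields $a^2+8b^2\equiv0$, hence $a\equiv\pm 5b\pmod{11}$ (as $-8\equiv 3\equiv 5^2\pmod{11}$), and again flip a sign to choose the right branch.

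The main obstacle will be exhibiting the correct linear change of variables that simultaneously (a) clears the factor $11$ into the coefficients $1,8,44$ and (b) makes all the entries $a,b,c$ integral given only the congruence information extracted in the first step. As in \eqref{equation B}, the substitution's denominators (here divisors of $11$) must be cancelled by the established residues of $x_1,x_2,x_3$, so the delicate bookkeeping is to verify that the modulo-$11$ and modulo-$8$ congruences proved at the outset are exactly what is needed to guarantee integrality after the substitution. I expect that, as in the preceding lemma, one auxiliary automorphism of $a^2+8b^2+44c^2$ analogous to \eqref{equation B} will have to be applied in the case where the residue of $a+5b$ (or $a\bmod 8$) comes out wrong, and pinning down that automorphism explicitly is the computational heart of the argument.
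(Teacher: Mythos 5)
Your road map (represent $8n-\lambda^2$ by a regular ternary form from Dickson's tables, absorb the factor $11$ via an algebraic identity, then secure the congruences by sign changes) matches the paper's proof in outline, but two of your concrete steps fail. First, the starting form you name cannot work: modulo $8$ the form $x_1^2+2x_2^2+2x_3^2$ only represents the residues $\{0,1,2,3,4,5,6\}$, never $7$, whereas $8n-\lambda^2\equiv 7\pmod 8$. The paper uses $x_1^2+x_2^2+2x_3^2$, whose mod-$8$ analysis forces \emph{two} odd variables ($x_1$ and $x_3$) and $x_2\equiv 2\pmod 4$. The identity is then
$11(x_1^2+2x_3^2)=(3x_1-2x_3)^2+2(x_1+3x_3)^2$, giving $a=3x_1-2x_3$, $b_1=(x_1+3x_3)/2$, $c=x_2/2$, so that $11x_2^2=44c^2$ with $c$ odd. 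Note that the two odd variables must be paired \emph{inside} the identity and the variable $\equiv 2\pmod 4$ becomes $c$; your plan to route ``the odd variable'' into $c$ would give $c=x_3/2\notin\Z$.

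Second, and more seriously, your mechanism for securing $a\equiv\lambda\pmod 8$ is broken. Since every odd square is $\equiv 1\pmod 8$, reducing $88n-11\lambda^2=a^2+8b^2+44c^2$ modulo $8$ (or even modulo $16$, because the term $8b^2$ has uncontrolled parity in $b$) only shows that $a$ is odd; it does \emph{not} force $a\equiv\pm\lambda\pmod 8$, and replacing $a$ by $-a$ only toggles within the pairs $\{1,7\}$ and $\{3,5\}$ modulo $8$. Concretely, for $\lambda=1$, $n=1$, the number $77=88n-11\lambda^2$ has the representation $(a,b,c)=(5,1,1)$, and neither $5$ nor $-5\equiv 3\pmod 8$ is $\equiv\lambda\pmod 8$; no sign flip (nor any automorphism you have exhibited) repairs this representation, and one must instead pass to the different representation $(1,2,1)$. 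The paper avoids this trap by arranging the congruence \emph{before} applying the identity: because the starting form $x_1^2+x_2^2+2x_3^2$ is diagonal, one may first normalize $x_1\equiv\lambda\pmod 4$ and then $x_3\equiv(3x_1-\lambda)/2\pmod 4$ by sign changes on $x_1,x_3$, after which $a=3x_1-2x_3\equiv 3x_1-(3x_1-\lambda)\equiv\lambda\pmod 8$ holds automatically. (Your mod-$11$ step, by contrast, is sound and is exactly what the paper does: $-8\equiv 5^2\pmod{11}$, so a sign flip on $b$ suffices there.) Without the idea of pre-arranging the mod-$4$ data of $x_1,x_3$, the key congruence $a\equiv\lambda\pmod 8$ is out of reach, so the proposal as written has a genuine gap.
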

\begin{proof}
By \cite[pp. 112--113]{D2}, we can write $8n-\lambda^2=x_1^2+x_2^2+2x_3^2$ with $x_1,x_2,x_3\in\Z$ and $2\nmid x_1$.
Since $8n-\lambda^2\equiv -1\equiv 1+2x_3^2\pmod4$, then $x_3$ is odd. Moreover, as
$8n-\lambda^2\equiv-1\equiv 1+x_2^2+2\pmod8$, we have $x_2\equiv2\pmod4$.

By changing the sign of $x_1$ if necessary, without loss of generality, we may assume that $x_1\equiv\lambda\pmod4$.
At this time, $(3x_1-\lambda)/2$ is an odd integer. Thus by changing the sign of $x_3$ if necessary, we may assume
that
$x_3\equiv(3x_1-\lambda)/2\pmod4$. Note that the identity:
\begin{align*}
88n-11\lambda^2&=11(x_1^2+x_2^2+2x_3^2)\\
&=(3x_1-2x_3)^2+2(x_1+3x_3)^2+11x_2^2.
\end{align*}
Set $a=3x_1-2x_3,\ b_1=(x_1+3x_3)/2,\ c=x_2/2$, then we have $88n-11\lambda^2=a^2+8b_1^2+44c^2$ and
$a\equiv 3x_1-2x_3\equiv3x_1-(3x_1-\lambda)\equiv \lambda\pmod8$. Since $88n-11\lambda^2\equiv0\equiv
(a+5b_1)(a-5b_1)\pmod{11}$, there is an $\ve\in\{\pm1\}$ such that $a+5\ve b\equiv0\pmod{11}$. Let $b=\ve b_1$.
Then $88n-11\lambda^2=a^2+8b^2+44c^2$ with $a,b,c$ satisfying the desired conditions.

This completes the proof.
\end{proof}
\noindent{\it Proof of Theorem \ref{Thm 1.2}}. {\rm (i)} Let $\lambda$ be a fixed positive odd integer. When
$n\ge\lambda^2/8$, by Lemma \ref{Lem B} we can write $88n-11\lambda^2=a^2+8b^2+44c^2$ $(a,b,c\in\Z)$ with $a,b,c$ satisfying the congruence conditions
described
in Lemma \ref{Lem B}. Thus there exists an $r\in\{0,1,...,10\}$ such that $a\equiv \lambda-16r\pmod8$ and $a\equiv
\lambda-16r\pmod{11}$.
By the Chinese Remainder Theorem, we may write $a=88s+\lambda-16r$ with $s\in\Z$. Since $a+5b\equiv0\pmod{11}$, there
exists an integer $t$ such that $b=11t+2\lambda+r$. Moreover, as $c$ is odd, we may set $c=2u-\lambda$ with $u\in\Z$.
Hence we have the identity:
\begin{equation*}
88n-11\lambda^2=(88s+\lambda-16r)^2+8(11t+2\lambda+r)^2+44(2u-\lambda)^2.
\end{equation*}
This implies that
\begin{equation}\label{equation D}
n=(\lambda+s+2t-u)^2+(s+2t+u)^2+(r-6s-t)^2+2(-r+5s-t)^2.
\end{equation}
Let $x=\lambda+s+2t-u,\ y=s+2t+u,\ z=r-6s-t,\ w=-r+5s-t$. By (\ref{equation D}) we have $n=x^2+y^2+z^2+2w^2$ and
$x+y+2z+2w=\lambda$. This completes the proof of {\rm (i)} of Theorem \ref{Thm 1.2}.

{\rm (ii)} Given a positive odd integer $\lambda$, when
$n>\lambda^2/16$, by Lemma \ref{Lem A}, there exist integers
$a,b,c$ satisfying the congruence conditions described in
Lemma \ref{Lem A} such that $80n-5\lambda^2=a^2+10b^2+16c^2$.

 Thus there exists an $r\in\{0,1,2,...,9\}$ such that $a\equiv 7\lambda+8r\pmod{16}$ and $a\equiv 7\lambda+8r\pmod5$.
 Then by the Chinese Remainder Theorem, we may set $a=80s+7\lambda+8r$ with $s\in\Z$. Since
 $c+2a\equiv0\pmod5$, there exist a $u\in\Z$ such that $c=5u+\lambda-r$. On the other hand, as
 $a+2b\equiv 9\lambda\pmod {16}$, we may set $b=8t+\lambda-4r$ with $t\in\Z$. Hence we have the identity:
 \begin{equation*}
 80n-5\lambda^2=(80s+7\lambda+8r)^2+10(8t+\lambda-4r)^2+16(5u+\lambda-r)^2.
 \end{equation*}
This implies that
\begin{equation}\label{equation C}
n=(\lambda+7s+t+u)^2+(-r-2s+2t-u)^2+(-3s-t+u)^2+2(r+3s-t-u)^2.
\end{equation}
Let $x=\lambda+7s+t+u,\ y=-r-2s+2t-u,\ z=-3s-t+u,\ w=r+3s-t-u$. Then the desired result follows from (\ref{equation C}).

{\rm (iii)} Let $R(x,y,z)=3x^2+5y^2+14z^2+2xy$. Since the discriminant of $R$ is $196$, when prime $p\not\in\{2,7\}$, $R(x,y,z)$ is unimodular in $\Z_p$ and hence $R(x,y,z)$ can represent all
$p$-adic integers over $\Z_p$. When $p=7$, $R(x,y,z)$ is $\Z_7$-equivalent to $3x^2-7y^2+14z^2$. Noting that both $-\lambda^2$ and $3$ are quadratic non-residue modulo $7$, by Hensel's Lemma it is easy to see that
$14n^2-\lambda^2$ can be represented by $R(x,y,z)$ over $\Z_7$. Moreover, as $14n-\lambda^2\equiv5\pmod8$, by Hensel's Lemma it is clear that $14n^2-\lambda^2$ can be represented by $R(x,y,z)$ over $\Z_2$. Hence $14n^2-\lambda^2$ can be represented by $\gen(R)$ if $n\ge\lambda/\sqrt{14}$. Furthermore, by \cite{JKS} we know that
$R(x,y,z)$ is a regular form. Hence $14n^2-\lambda^2$ can be represented by $R(x,y,z)$.
Set $h(x,y,z)=x^2+14y^2+42z^2$. We have the identity:
\begin{equation*}
h(3x+y,y,z)=3R(x,y,z).
\end{equation*}
Thus there exist $a,b,w\in\Z$ with $2\nmid a$ such that $42n^2-3\lambda^2=a^2+14b^2+42w^2.$ As
$42n^2-3\lambda^2\equiv-3\lambda^2\equiv a^2\pmod7$, we have $(a+2\lambda)(a-2\lambda)\equiv0\pmod7$. Without loss of generality, we assume that $a\equiv 2\lambda\pmod7$ (otherwise we replace $a$ by $-a$). Since
$42n^2-3\lambda^2\equiv0\equiv a^2-b^2\pmod3$, by changing the sign of $b$ if necessary, we assume that
$b\equiv a\pmod3$.

Since $a\equiv 2\lambda\pmod7$ and $2\nmid a$, there is an $r\in\{0,1,2\}$ such that
$a\equiv -5\lambda-14r\pmod{14}$ and $a\equiv -5\lambda-14r\pmod{3}$. By the Chinese Remainder Theorem, we may set $a=42s-5\lambda-14r$ with $s\in\Z$. Since $b\equiv a\pmod3$, there exists an integer $t$
such that $b=3t+\lambda-5r$. And we have
\begin{equation*}
42n^2-3\lambda^2=(42s-5\lambda-14r)^2+14(3t+\lambda-5r)^2+42w^2.
\end{equation*}
This implies that
\begin{equation}\label{equation E}
n^2=(\lambda-5s+t)^2+(-3r+4s+t)^2+(2r-s-t)^2+w^2.
\end{equation}
Let $x=\lambda-5s+t,\ y=-3r+4s+t,\ z=2r-s-t$. Then the desired result follows from (\ref{equation E}).

In view of the above, the proof of Theorem \ref{Thm 1.2} is now complete.\qed

\medskip
\noindent{\it Proof of Theorem \ref{Thm 1.3}}. Let $R^*(x,y,z)=3x^2+5y^2+7z^2+2xy$. Since the discriminant of
$R^*$ is $98$, when the prime $p\not\in\{2,7\}$,
$R^*(x,y,z)$ is unimodular in $\Z_p$ and hence it can represent all $p$-adic integers over $\Z_p$.
When $p=7$, $R^*(x,y,z)$ is $\Z_7$-equivalent to the form $3x^2-7y^2+7z^2$. Since $7\nmid \lambda$ and $-4\lambda^2$ and $3$ are quadratic non-residuess modulo $7$, by Hensel's Lemma, it is easy to see that
$14n-4\lambda^2$ can be represented by $R^*(x,y,z)$ over $\Z_7$. When $p=2$, we set
$$E_2(R^*)=\{m\in\Z_2: m\ne R^*(x,y,z)\ \text{for all}\ x,y,z\in\Z_2\}.$$
Since $R^*(x,y,z)$ is $\Z_2$-equivalent to the form $3x^2+10y^2+7z^2$, by the effective method of D. W. Jones
\cite[pp. 186--187]{Jones}, we have
\begin{align}\label{Except set A}
\{16m+14: m\in\Z_2\}\subseteq E_2(R^*),
\end{align}
and
\begin{align}\label{Except set B}
\{8m+2,8m+4,16m+6: m\in\Z_2\}\cap E_2(R^*)=\emptyset.
\end{align}
Since $2n-2\lambda\equiv2\pmod4$, we have $14\cdot(2n)-4\lambda^2\equiv 14(2n-2\lambda)\equiv4\pmod8$ and
$2n+1-2\lambda\not\equiv1\pmod8$. We also have $14\cdot(2n+1)-4\lambda^2\equiv2\pmod4$ and
$14\cdot(2n+1)-4\lambda^2\equiv14\cdot(2n+1-2\lambda)\not\equiv 14\pmod{16}$. Thus by (\ref{Except set A}) and
(\ref{Except set B}),
when $n$ satisfies the conditions described in Theorem \ref{Thm 1.3}, $14(2n+\delta)-4\lambda^2$ can be represented
by $\gen(R^*)$. Moreover, by \cite{JKS} we know that $R^*(x,y,z)$ is a regular form. This implies that
$14(2n+\delta)-4\lambda^2$ can be represented by $R^*(x,y,z)$ whenever $n$ satisfies the conditions in Theorem \ref{Thm 1.3}.
Let $l(x,y,z)=x^2+14y^2+35z^2$. We have the identity:
\begin{equation}\label{equation F}
l(x+5y, x, z)=5R^*(x,y,z).
\end{equation}
By (\ref{equation F}) for each $n\in\N$ satisfying the conditions in Theorem \ref{Thm 1.3},
there exist $a,b,c\in\Z$ such that $70(2n+\delta)-20\lambda^2=a^2+14b^2+35c^2$. Clearly, $a\equiv c\pmod 2$.
Since $70(2n+\delta)-20\lambda^2\equiv \lambda^2\equiv a^2\pmod7$, without loss of generality, we assume that
$a\equiv\lambda\pmod7$ (otherwise we may replace $a$ by $-a$). On the other hand, as
$70(2n+\delta)-20\lambda^2\equiv 0\equiv a^2-b^2\pmod5$, by changing the sign of $b$ if necessary, we may assume that
$b\equiv a\pmod5$. Thus we can find an $r\in\{0,1,2,...,9\}$ such that $a\equiv -\lambda-7r\pmod2$ and
$a\equiv -\lambda-7r\pmod5$. Then by the Chinese Remainder Theorem, we can write $a=70s-\lambda-7r$ with $s\in\Z$.
Since $b\equiv a\pmod 5$, we may set $b=5t-\lambda-2r$ with $t\in\Z$. As $c\equiv a\pmod2$, there is an integer
$u$ such that $c=2u-\lambda+r$. Hence we have
\begin{equation*}
70(2n+\delta)-20\lambda^2=(70s-\lambda-7r)^2+14(5t-\lambda-2r)^2+35(2u-\lambda+r)^2.
\end{equation*}
This implies that
\begin{equation}\label{equation G}
2n+\delta=(\lambda-s-t-u)^2+(r-s-t+u)^2+(-r+6s+t)^2+2(-4s+u)^2.
\end{equation}
Let $x=\lambda-s-t-u,\ y=r-s-t+u,\ z=-r+6s+t,\ w=-4s+u$. By (\ref{equation G}) we have
$2n+\delta=x^2+y^2+z^2+2w^2$ and $x+y+z+w=\lambda$. This completes the proof.
\qed
\maketitle
\section{Proof of Theorem \ref{Thm 1.4}}
\setcounter{lemma}{0}
\setcounter{theorem}{0}
\setcounter{corollary}{0}
\setcounter{remark}{0}
\setcounter{equation}{0}
\setcounter{conjecture}{0}

In \cite{MS}, X. -Z. Meng and the second author generalized Cauchy's Lemma and
obtained the following results.
\begin{lemma}\label{generalized Cauchy's Lemma 1}
{\rm (i)} {\rm(\cite[Lemma 2.4]{MS})} Let $a$ and $b$ be positive integers satisfying $b^2<4a$ and
$3a<b^2+2b+4$. Suppose that $2\nmid ab$ or $\ord_2(a)=1$ $(\ord_2(a)$ denotes the $2$-adic order of $a)$ and $2\mid
b$. Then there exist $s,t,u,v\in\N$ such
that
$$a=s^2+t^2+u^2+v^2\ \text{and}\ b=s+t+u+v.$$
{\rm (ii)} {\rm(\cite[Lemma 3.3]{MS})} Let $a$ and $b$ be positive integers with $a\equiv b\pmod2$ satisfying
$b^2<6a$ and $5a<b^2+2b+6$. Suppose that $2\nmid a$ or $\ord_2(a)=2$, and that $3\mid a$ or $3\nmid b$. Then
there exist $s,t,u,v\in\N$ such that
$$a=s^2+t^2+2u^2+2v^2\ \text{and}\ b=s+t+2u+2v.$$
{\rm (iii)} {\rm(\cite[Lemma 4.1]{MS})} Let $a$ and $b$ be positive integers satisfying $b^2<6a$ and
$5a<b^2+2b+6$. Suppose that $a\equiv b\pmod2$, and that $a\equiv3\pmod9$ or $3\nmid b$. Then there exist
 $s,t,u,v\in\N$ such that
 $$a=s^2+t^2+u^2+3v^2\ \text{and}\ b=s+t+u+3v.$$
\end{lemma}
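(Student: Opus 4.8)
The plan is to treat all three parts by the same scheme, generalizing the classical proof of Cauchy's Lemma. The key observation is that in each part the quadratic form and the linear form share the same coefficient vector: writing $Q=\sum_i c_ix_i^2$ and $L=\sum_i c_ix_i$ with $c=(1,1,1,1),(1,1,2,2),(1,1,1,3)$ respectively and $\sigma=\sum_i c_i\in\{4,6,6\}$, the Cauchy--Schwarz inequality gives $L^2\ls\sigma Q$, with equality exactly when all the $x_i$ coincide. Hence, under the lower hypothesis ($b^2<4a$ in (i), $b^2<6a$ in (ii) and (iii)), the integer $N:=\sigma a-b^2$ is nonnegative, and the quaternary form $\sigma Q-L^2$ is positive semidefinite of rank $3$ with radical spanned by $(1,1,1,1)$. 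The whole strategy is to realize $N$ as a value of this degenerate form subject to $L=b$, and then read off $(s,t,u,v)$.

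For part (i) I would follow the classical Cauchy route. First I would use the Gauss--Legendre three-square theorem to write $4a-b^2=x^2+y^2+z^2$; a short computation in the two parity cases ($2\nmid ab$, or $\ord_2(a)=1$ with $2\mid b$) shows that the hypotheses force $4a-b^2$ to avoid the forbidden shape $4^{j}(8k+7)$, so such a representation exists. Then I would apply the Hadamard identity
\begin{equation*}
4s=b+x+y+z,\quad 4t=b+x-y-z,\quad 4u=b-x+y-z,\quad 4v=b-x-y+z,
\end{equation*}
which automatically yields $s+t+u+v=b$ and $s^2+t^2+u^2+v^2=a$. It then remains to choose the signs of $x,y,z$ so that these four numbers are integers (the $2$-adic check again splitting along the two parity cases) and to use the upper hypothesis $3a<b^2+2b+4$ to guarantee that the smallest of them is nonnegative.

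For parts (ii) and (iii) I would run the same template but with a genuine ternary reduction. Choosing an integral basis of $\Z^4/\langle(1,1,1,1)\rangle$ presents $\sigma Q-L^2$ as a positive-definite integral ternary form $T$, and I would show that $T$ represents $N=6a-b^2$ under the stated congruences---either by identifying $T$ with a sum of three squares up to scaling and invoking the three-square theorem, or by appealing to regularity exactly as in the treatment of $R$ and $R^*$ in Section~3. A representation of $N$ by $T$ lifts to $(s,t,u,v)$ with $L=b$, where the $2$-adic and $3$-adic congruences secure integrality and the upper inequality $5a<b^2+2b+6$ secures non-negativity. For (ii) there is a convenient shortcut worth recording: the substitution $p=u+v,\ q=u-v$ turns $s^2+t^2+2u^2+2v^2$ into $s^2+t^2+p^2+q^2$ and the constraint into $s+t+2p=b$, so the problem collapses to a four-square representation with a linear condition on three variables, with $u=(p+q)/2$ and $v=(p-q)/2$ recovered from $p\eq q\pmod2$ and $p\gs|q|$.

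The hard part will be the congruence bookkeeping, not any individual representation theorem. Across the full case split one must arrange \emph{simultaneously} that (a) the relevant ternary target is a sum of three squares, i.e.\ avoids $4^{j}(8k+7)$ together with the local obstructions at $2$ and $3$; (b) the back-substitution returns integers; and (c) those integers are nonnegative. The several hypotheses---$\ord_2(a)\in\{1,2\}$, the parity of $b$, and the alternatives ``$3\mid a$ or $3\nmid b$'' and ``$a\eq3\pmod9$ or $3\nmid b$''---are each tuned to exactly one of (a), (b), (c), and the real labor is verifying that one choice of signs and residues meets all three demands at once in every case.
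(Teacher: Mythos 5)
First, a structural point: the paper does \emph{not} prove this lemma at all --- all three parts are quoted, with precise citations, from Meng--Sun \cite{MS} (their Lemmas 2.4, 3.3 and 4.1), and are used as a black box in Section 4. So there is no internal argument to compare against, and your sketch has to stand on its own. Judged that way, your part (i) is sound and is exactly the classical Cauchy argument that \cite{MS} generalizes: the parity analysis works ($2\nmid ab$ gives $4a-b^2\eq 3\pmod 8$, while $\ord_2(a)=1$, $2\mid b$ gives $4a-b^2=4m$ with $m\eq 1,2,5,6\pmod 8$, so the shape $4^j(8k+7)$ is avoided), the sign choices make all four numbers $b\pm x\pm y\pm z$ divisible by $4$, and $3a<b^2+2b+4$ is precisely the inequality $\sqrt{3(4a-b^2)}<b+4$ needed to make the smallest of them nonnegative.

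Parts (ii) and (iii) are where the genuine gap lies. Your framework is coherent: descend $6Q-L^2$ to the rank-$3$ quotient $\Z^4/\Z(1,1,1,1)$, represent $N=6a-b^2$ by the resulting ternary form $T$ using a vector on which $L\eq b\pmod 6$, then translate by an integer multiple of $(1,1,1,1)$ to force $L=b$. But the central step --- producing a representation of $N$ satisfying that side congruence on $L$ --- is exactly what you defer as ``the real labor,'' and neither of your two proposed routes is carried out. Worse, the first route is unavailable as stated: taking the coset representatives with last coordinate $0$, one computes for (ii) that $T=5y_1^2+5y_2^2+8y_3^2-2y_1y_2-4y_1y_3-4y_2y_3$, of determinant $144$, and for (iii) that $T=5(y_1^2+y_2^2+y_3^2)-2(y_1y_2+y_1y_3+y_2y_3)$, of determinant $108$; a form $\lambda(x^2+y^2+z^2)$ has determinant $\lambda^3$, and neither $144$ nor $108$ is a cube, so $T$ is not a scaled sum of three squares. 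The second route (regularity, as for $R$ and $R^*$ in Section 3) would at best give \emph{some} representation of $N$ even if $T$ is on the Jagy--Kaplansky--Schiemann list, which you have not checked; it gives no control on the residue of $L$ modulo $6$, and that congruence is the crux --- it is where the hypotheses ``$2\nmid a$ or $\ord_2(a)=2$,'' ``$3\mid a$ or $3\nmid b$,'' ``$a\eq 3\pmod 9$ or $3\nmid b$'' must enter, via the kind of explicit identity, sign-flip and Chinese Remainder Theorem bookkeeping that the paper itself performs in its Section 3 proofs. The same criticism applies to your shortcut for (ii): rewriting the problem as $a=s^2+t^2+p^2+q^2$, $b=s+t+2p$ with $p\eq q\pmod 2$ and $p\gs|q|$ is a valid reduction, but no cited theorem produces such constrained representations. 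Until that bookkeeping is actually done, parts (ii) and (iii) remain unproved; part (i) alone is complete in outline.
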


Recall the notations given in (\ref{bound A})-(\ref{bound C}). We have the following result.
\begin{lemma}\label{length of interval}
For any positive integers $n,j$, let
$$\mathcal{I}_{j,k}=\l(\l(-1+\sqrt{j(n-1})\r)^{1/k},\l(\sqrt{(j+1)n}\r)^{1/k}\r).$$
We have

{\rm (i)} If $n>a_k(j,l)$, then the length of $\mathcal{I}_{j,k}$ is greater than $l$.

{\rm (ii)} If $n>c_k(j,l)$, then $\mathcal{I}_{j,k}$ contains at least one prime.
\end{lemma}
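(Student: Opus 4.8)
The plan is to handle the two parts in sequence, since part (ii) will feed on the length estimate proved in part (i). Throughout write $A=\sqrt{(j+1)n}$ and $B=-1+\sqrt{j(n-1)}$, so that the endpoints of $\mathcal{I}_{j,k}$ are $B^{1/k}$ and $A^{1/k}$ and the quantity of interest is the length $A^{1/k}-B^{1/k}$.

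For part (i), I would first reduce the difference of $k$-th roots to a difference of radicands by the mean value theorem: since $t\mapsto t^{1/k}$ has decreasing derivative $\frac{1}{k}t^{1/k-1}$ on $(0,\infty)$, one gets
\[
A^{1/k}-B^{1/k}\ge \frac{A-B}{k\,A^{(k-1)/k}}.
\]
Next I would bound $A-B$ from below. Rationalizing gives $\sqrt{(j+1)n}-\sqrt{j(n-1)}=\frac{n+j}{\sqrt{(j+1)n}+\sqrt{j(n-1)}}\ge \frac{n}{2\sqrt{(j+1)n}}=\frac{\sqrt n}{2\sqrt{j+1}}$, so $A-B\ge \frac{\sqrt n}{2\sqrt{j+1}}$. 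Substituting $A^{(k-1)/k}=((j+1)n)^{(k-1)/(2k)}$ and collecting the powers of $n$ and of $j+1$ collapses everything to the clean estimate $A^{1/k}-B^{1/k}\ge n^{1/(2k)}/\big(2k(j+1)^{(2k-1)/(2k)}\big)$. Demanding that this exceed $l$ is, after raising to the $2k$-th power, exactly the inequality $n>(2kl)^{2k}(j+1)^{2k-1}=a_k(j,l)$, which proves (i).

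For part (ii) I would invoke Dusart's Lemma (Lemma \ref{prime}) at the left endpoint $P:=B^{1/k}$. The summand $(3275)^{2k}/j$ in $c_k(j,l)$ guarantees $jn>3275^{2k}$, hence $P>3275$ and Dusart applies: there is a prime in $[P,\,P+\frac{P}{2\ln^2 P}]$. It then suffices to show the Dusart gap $\frac{P}{2\ln^2P}$ is smaller than the length $A^{1/k}-B^{1/k}$, for then the guaranteed prime lies below $A^{1/k}$ and so inside $\mathcal{I}_{j,k}$. Since $x\mapsto x/(2\ln^2 x)$ is increasing for $x>3275$ and $P\le (jn)^{1/(2k)}$, I would bound $\frac{P}{2\ln^2 P}\le \frac{2k^2(jn)^{1/(2k)}}{\ln^2(jn)}$; comparing this with the length lower bound from part (i) reduces the desired inequality to $\ln^2(jn)>4k^3 j^{1/(2k)}(j+1)^{(2k-1)/(2k)}$, i.e. to $n>\frac1j\exp\!\big(2k^{3/2}j^{1/(4k)}(j+1)^{(2k-1)/(4k)}\big)=b_k(j,l)$. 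Thus $n>c_k(j,l)$ supplies all three requirements at once.

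The main obstacle is the part (ii) comparison: the length grows only like $n^{1/(2k)}$, while the Dusart gap grows like $n^{1/(2k)}/\ln^2 n$, so the two are of nearly the same order and the whole margin is governed by the slowly varying factor $\ln^{-2}$. This is precisely why the threshold $b_k$ is transcendental (an exponential of a power of $j$) rather than polynomial, and the delicate point is to carry the exponents of $n$, $j$, $j+1$ and $k$ through the monotonicity bound so that the final inequality matches the stated closed form for $b_k$ exactly. A secondary, cosmetic issue is that $\mathcal{I}_{j,k}$ is open: one should note that $P$ is generically irrational, so the Dusart prime, being an integer $\ge P$, is strictly larger than $P$, and the bound $\mathrm{length}>l\ge1$ from (i) leaves room to avoid both endpoints.
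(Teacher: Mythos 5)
Your part (i) is correct, and your mean-value-theorem route is only a cosmetic variant of the paper's argument: the paper instead observes that $((jn)^{1/2k},((j+1)n)^{1/2k})\subseteq\mathcal{I}_{j,k}$ and uses the factorization $x^{2k}-y^{2k}=(x-y)\sum_{s=0}^{2k-1}x^{s}y^{2k-1-s}$, and both routes land on the same lower bound $n^{1/2k}/(2k(j+1)^{(2k-1)/2k})$, from which $a_k(j,l)$ falls out identically. Likewise your gap-versus-length comparison producing $b_k(j,l)$ is exactly the intended computation. The genuine gap is in part (ii), at the step ``$jn>3275^{2k}$, hence $P>3275$.'' This inference is false: $P=(-1+\sqrt{j(n-1)})^{1/k}$ is strictly smaller than $(jn)^{1/2k}$, and the difference bites precisely at the threshold. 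Concretely, take $k=j=l=1$, so that $c_1(1,1)=\max\{8,\exp(2\cdot2^{1/4}),3275^2\}=3275^2$, and take $n=3275^2+1>c_1(1,1)$: then $jn>3275^{2}$ but $P=-1+\sqrt{n-1}=3274<3275$, so Dusart's lemma (Lemma \ref{prime}), as stated, cannot be invoked at $P$. The summand $3275^{2k}/j$ in $c_k$ is calibrated to the interior point $(jn)^{1/2k}$, not to the left endpoint of $\mathcal{I}_{j,k}$. A second, related defect: even when $P>3275$, Dusart only guarantees a prime $p\in[P,P+P/(2\ln^2P)]$, and if $P$ is a prime integer (possible, since $j(n-1)$ may equal $(P^k+1)^2$) that guaranteed prime may be $P$ itself, which lies on the boundary and not in the open interval; ``$P$ is generically irrational'' is not a proof, and knowing the length exceeds $l\ge1$ does not manufacture a second prime.

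Both defects are repaired simultaneously by the paper's device, which is the one real difference between your write-up and the paper's proof: run the Dusart step at the interior point $(jn)^{1/2k}$ rather than at $P$. Since $((jn)^{1/2k},((j+1)n)^{1/2k})\subseteq\mathcal{I}_{j,k}$, the hypothesis $n>3275^{2k}/j$ gives exactly $(jn)^{1/2k}>3275$; the prime produced satisfies $p\ge(jn)^{1/2k}$, hence is strictly above the left endpoint of $\mathcal{I}_{j,k}$; and your own comparison of the Dusart gap with the length of the sub-interval $((jn)^{1/2k},((j+1)n)^{1/2k})$ (which is where $n>b_k(j,l)$ enters, with the identical computation) places $p$ strictly below the common right endpoint. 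With that single relocation your argument becomes a complete and correct proof of part (ii).
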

\begin{proof}
Clearly the interval $\l((jn)^{1/2k}, ((j+1)n)^{1/2k}\r)\subseteq \mathcal{I}_{j,k}$. Note that
\begin{align*}
((j+1)n)^{1/2k}-(jn)^{1/2k}&=\frac{n}{\sum_{s=0}^{2k-1}((j+1)n)^{s/2k}(jn)^{(2k-1-s)/2k}}
\\&>\frac{n^{1/2k}}{2k(j+1)^{(2k-1)/2k}}.
\end{align*}
Hence by computation and Lemma \ref{prime}, we get the desired results.
\end{proof}
\medskip

\noindent{\it Proof of Theorem \ref{Thm 1.4}}.
{\rm (1)} It is easy to verify that each $x\in \mathcal{I}_{3,k}$ satisfies the conditions $x^{2k}<4n$ and
$x^{2k}+2x^k+4>3n$. If $n\equiv2\pmod4$ and
$n>a_k(3,2)$, then by Lemma \ref{length of interval}, we can find an even integer $2b$ in $\mathcal{I}_{3,k}number of spinor genus$.
Similarly, if $n$ is odd and $n>c_k(3,2)$, then we can find a prime $p\in \mathcal{I}_{3,k}$. Then {\rm (i)} of
Theorem \ref{Thm 1.4} follows from Lemma \ref{generalized Cauchy's Lemma 1} {\rm (i)}. {\rm (2)} Via computation, each $x\in \mathcal{I}_{5,k}$ satisfies
the conditions $x^{2k}<6n$ and
$5n<x^{2k}+2x^k+6$. If $4\mid n$ and $n>a_k(5,6)$ (resp. $a_k(5,4)$), then we can find an
integer $2b\in \mathcal{I}_{5,k}$ with $n\equiv 2b$ and $3\nmid 2b$. If
$2\nmid n$ and $n>c_k(5,6)$ (resp. $c_k(5,4)$), then we can find a prime $p\in \mathcal{I}_{5,k}$.
Then {\rm (ii)} and {\rm (iii)}
of Theorem \ref{Thm 1.4} follow from parts (ii) and (iii) of Lemma \ref{generalized Cauchy's Lemma 1} respectively.

In view of the above, we complete the proof.\qed

\maketitle
\section{Some open problems}
\setcounter{lemma}{0}
\setcounter{theorem}{0}
\setcounter{corollary}{0}
\setcounter{remark}{0}
\setcounter{equation}{0}
\setcounter{conjecture}{0}

Inspired by Sun's 1-3-5 conjecture and part (i) of Theorem \ref{Thm 1.4}, we pose the following conjecture.

\begin{conjecture}\label{Conjecture A}
For any $a,b,c,d\in\N$ with $a^2+b^2+c^2+d^2\ne0$,
there exists a set $\mathcal{S}\subseteq\N$ with density zero such that
any sufficiently large integer can be written as $x^2+y^2+z^2+w^2$
 $(x,y,z,w\in\Z)$ with $ax+by+cz+dw\in\mathcal{S}$.
\end{conjecture}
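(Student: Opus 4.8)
The plan is to reduce the linear constraint to a representation problem for a ternary quadratic form, exactly in the spirit of the proofs of Theorems \ref{Thm 1.1}--\ref{Thm 1.3}, and then to exploit the freedom we are given in choosing $\mathcal{S}$. Write $\mathbf a=(a,b,c,d)$, $\mathbf x=(x,y,z,w)$, $S=a^2+b^2+c^2+d^2$ and $L(\mathbf x)=ax+by+cz+dw$. Lagrange's identity gives
$$S(x^2+y^2+z^2+w^2)-L(\mathbf x)^2=\sum_{1\ls i<j\ls4}(a_ix_j-a_jx_i)^2=:\Phi(\mathbf x),$$
a positive semidefinite form of rank $3$ whose kernel is spanned by $\mathbf a$. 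Hence, for a prescribed value $v$, a representation $n=x^2+y^2+z^2+w^2$ with $L(\mathbf x)=v$ exists if and only if the integer $N:=Sn-v^2$ is represented by the restriction of $\Phi$ to the coset $\{\mathbf x\in\Z^4:\ L(\mathbf x)=v\}$, which (whenever $\gcd(a,b,c,d)\mid v$) is a shifted rank-$3$ lattice carrying a ternary quadratic form $f=f_{\mathbf a}$ subject to a congruence condition. This is precisely the shape of Lemma \ref{Lem A} and Lemma \ref{Lem B}.

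First I would fix $0<\al<\be$ with $\be^2<S$, so that for $v\in[\al\sqrt n,\be\sqrt n]$ one has $N=Sn-v^2\asymp n$, and then analyze $f_{\mathbf a}$ locally. The anisotropic primes of $f_{\mathbf a}$ divide $2S\det(f_{\mathbf a})$, a fixed finite set; by imposing a congruence $v\eq v_0\pmod M$ (and restricting $n$ to a fixed residue class modulo a fixed integer) one forces $N$ to be locally represented everywhere, to have bounded divisibility at each anisotropic prime, and---the crucial point---to lie in the correct spinor genus of $f_{\mathbf a}$. Since spinor-genus membership is governed by a single idele-class character (cf. Lemma \ref{number of spinor genus}), showing that it is constant and favourable along an entire progression $v\eq v_0\pmod M$ is the heart of the matter. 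Granting this, the ineffective estimates of Duke and Schulze-Pillot underlying Lemma \ref{modular forms} give, for all sufficiently large $n$ and every admissible $v\in[\al\sqrt n,\be\sqrt n]$ with $v\eq v_0\pmod M$, that $N$ is represented by $f_{\mathbf a}$ on the required coset; adjusting signs and resolving the coset conditions by the Chinese Remainder Theorem (as in the proofs above) then recovers $x,y,z,w\in\Z$ with $x^2+y^2+z^2+w^2=n$ and $L(\mathbf x)=v$.

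It remains to choose $\mathcal{S}$. The previous step yields constants $\al<\be$, a modulus $M$, a residue $v_0$ and a threshold such that every large $n$ in the relevant class is representable with $L(\mathbf x)$ equal to any admissible $v\eq v_0\pmod M$ in $[\al\sqrt n,\be\sqrt n]$. Fixing $\rho$ with $1<\rho<\be/\al$, I would take $\mathcal{S}$ to consist of the integers nearest to $\rho^{\,j}$ lying in the class $v_0\pmod M$ (for $j$ large), together with the analogous sets for the finitely many residue classes of $n$ that must be treated separately. Because consecutive intervals $[\al\sqrt n,\be\sqrt n]$ overlap and span a multiplicative factor $\be/\al>\rho$, the set $\mathcal{S}$ meets every such interval once $n$ is large, so every sufficiently large $n$ is representable with $ax+by+cz+dw\in\mathcal{S}$; and since $\#(\mathcal{S}\cap[0,X])=O(\log X)$, the set $\mathcal{S}$ has density zero. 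The main obstacle is the uniform ternary step: for general $(a,b,c,d)$ the form $f_{\mathbf a}$ need be neither regular nor of a single spinor genus, so Lemma \ref{modular forms} does not apply verbatim and one must both invoke Duke--Schulze-Pillot at the level of spinor genera and prove that the spinor-genus condition can be met simultaneously for a full progression of target values $v$---it is exactly this uniformity that places the statement beyond the reach of the methods used for Theorems \ref{Thm 1.1}--\ref{Thm 1.4}.
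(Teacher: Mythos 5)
The statement you were asked to prove is Conjecture \ref{Conjecture A} of the paper: it is posed as an \emph{open problem}, the authors give no proof, and they pose it precisely because the machinery behind Theorems \ref{Thm 1.1}--\ref{Thm 1.4} (regular ternary forms, or forms with a single spinor genus, where Lemma \ref{modular forms} applies) does not reach it. So your proposal must stand on its own, and it does not: as you yourself concede in the final sentence, the central step --- uniform representation of $Sn-v^2$ by the shifted ternary lattice $\{\mathbf{x}\in\Z^4:\ L(\mathbf{x})=v\}$, with control of spinor-genus membership simultaneously for a full progression of target values $v$ --- is left unproved, and Lemma \ref{modular forms} does not apply to such cosets for general $(a,b,c,d)$. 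A sketch whose key lemma is admitted to be out of reach is a plan of attack, not a proof; everything after that point (the near-geometric choice of $\mathcal{S}$, the $O(\log X)$ count giving density zero) is the easy part.

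Moreover, the uniform claim you would need is not merely unproved but false as formulated. Take $n=4^r\cdot2$. Over $\Z_2$, every solution of $x^2+y^2+z^2+w^2=4^r\cdot 2$ has $x,y,z,w\eq0\pmod{2^r}$: if some variable were a $2$-adic unit, the remaining three squares would sum to an integer $\eq7\pmod8$, which is impossible, and one iterates after dividing by $4$. Hence $L(\mathbf{x})\eq0\pmod{2^r}$ for every representation, so the local obstruction modulus grows with $n$; for these $n$ only $O(1)$ values of $v$ in $[\al\sqrt n,\be\sqrt n]$ are even locally admissible (and globally the representation is unique up to order and signs, as recalled in the paper's introduction). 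No scheme of the shape ``fix $M$, fix $v_0$, restrict $n$ to finitely many residue classes'' can cope with this, because $\{4^r\cdot2:\ r\in\N\}$ meets some residue class modulo any fixed modulus infinitely often yet is not a union of residue classes, so within every class you keep, your uniform conclusion fails along an infinite subsequence. This is exactly the obstruction the authors record in their introduction --- it is why no \emph{finite} $\mathcal{S}$ exists for four squares and why their theorems switch to $x^2+y^2+z^2+2w^2$. A correct proof would have to insert bespoke sparse values (such as $2^r|a\pm b|$, $2^r|a\pm c|$, etc.) into $\mathcal{S}$ for these exceptional families, and to isolate the $n$ to which a uniform ternary estimate could apply by conditions that are not congruence conditions in $n$; neither ingredient appears in your sketch.
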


In contrast with the above, motivated by Theorems \ref{Thm 1.2}-\ref{Thm 1.3}, we pose the following conjecture.

\begin{conjecture}\label{Conjecture B}
For any $a,b,c,d\in\N$ with $a^2+b^2+c^2+d^2\ne0$,
there exists a finite set $\mathcal{S}\subseteq\N$ such that
any sufficiently large integer can be written as $x^2+y^2+z^2+2w^2$ $(x,y,z,w\in\Z)$
 with $ax+by+cz+dw\in\mathcal{S}$.
\end{conjecture}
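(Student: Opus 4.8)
The plan is to imitate the reduction behind Theorems~\ref{Thm 1.2}--\ref{Thm 1.3}, but to let the target value of the linear form range over a finite residue system rather than being a single fixed number, and to replace the regular ternary forms used there by the general analytic input of Lemma~\ref{modular forms}. Write $Q(x,y,z,w)=x^2+y^2+z^2+2w^2$ and $L=ax+by+cz+dw$. Since $Q$ is universal and the variables range over $\Z$, there are no positivity constraints to manage, so the entire difficulty is the bare solvability of the system $Q=n$, $L=\lambda$. For a fixed $\lambda$ with $\gcd(a,b,c,d)\mid\lambda$, the affine hyperplane $\{L=\lambda\}$ is a coset of the rank-$3$ lattice $\ker L$, and the minimum of $Q$ on it equals $\lambda^2/N$, where $N$ is the $Q$-dual norm of the coefficient vector $(a,b,c,d)$ (for instance $N=8$ for $(1,1,2,2)$, matching the bound $n\gs\lambda^2/8$ in Theorem~\ref{Thm 1.2}(i)). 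Completing the square and clearing denominators turns $Q=n$ into the assertion that a fixed positive definite integral ternary form $T=T_{a,b,c,d}$ represents an integer $m(n,\lambda)=c_0 n-c_1\lambda^2$ (with fixed positive integers $c_0,c_1$ depending only on the coefficients), subject to congruence conditions on the representing vector modulo divisors of $N$; these side conditions are handled exactly as in Lemmas~\ref{Lem A}--\ref{Lem B}, by sign changes and by shifting $\lambda$ inside a residue class.

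First I would localize. For every prime $p$ not dividing $2N\det T$ the form $T$ is unimodular over $\Z_p$ and hence $\Z_p$-universal, so $m(n,\lambda)$ is automatically represented there; only the finite set $P$ of bad primes (including $2$ and the anisotropic primes of $T$) requires attention, where representability over $\Z_p$ amounts to a congruence on $m(n,\lambda)$ modulo a bounded power of $p$. I would then define $\mathcal{S}$ to be a finite set of admissible values of $\lambda$ drawn from a complete residue system modulo a modulus $M$ supported on $P$. The point is that, as $\lambda$ runs over residues mod $M$, the quantities $m(n,\lambda)\pmod M$ sweep out enough classes that, for every $n$, at least one $\lambda\in\mathcal{S}$ makes $m(n,\lambda)$ locally represented by $T$ at every prime of $P$ while keeping its divisibility at the anisotropic primes bounded. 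If no single $\lambda$ works for all residue classes of $n$ mod $M$, I would simply include in $\mathcal{S}$ one good representative for each class of $n$; the set stays finite.

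With $\mathcal{S}$ so chosen, I would pass from the genus to the form by Lemma~\ref{modular forms}. If $\gen(T)$ consists of a single spinor genus (which, by Lemmas~\ref{number of spinor genus} and \ref{modular forms}, can be verified by the same idelic spinor-norm computation used in the proof of Theorem~\ref{Thm 1.1}), then any sufficiently large $m(n,\lambda)$ that is genus-represented and has bounded anisotropic divisibility is represented by $T$ itself, and unwinding the parametrization produces the desired $x,y,z,w\in\Z$. When $\gen(T)$ splits into several spinor genera, Lemma~\ref{modular forms} no longer applies verbatim, and one must instead appeal to the structure of the spinor-exceptional integers: by the work of Duke and Schulze--Pillot underlying Lemma~\ref{modular forms}, the integers represented by $\gen(T)$ but not by the spinor genus of $T$ are confined to finitely many explicitly described families (essentially bounded square classes cut out by splitting conditions at the primes of $P$). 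These exceptional families can be dodged by a further congruence refinement of $\mathcal{S}$, so that for each $n$ at least one admissible $\lambda$ yields a large, locally represented, non-exceptional value $m(n,\lambda)$.

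The main obstacle will be this last point: controlling the spinor exceptions uniformly in $n$. For a single fixed $n$ it is clear that the finitely many values $\{m(n,\lambda):\lambda\in\mathcal{S}\}$ cannot all fall into the boundedly many exceptional square classes; the delicate part is to arrange the refinement of $\mathcal{S}$ once and for all so that this holds for every sufficiently large $n$ simultaneously. Making the incompatibility between the parabola $\lambda\mapsto m(n,\lambda)$ and the exceptional square classes uniform in $n$ — and confirming that the requisite spinor-norm and local data depend only on $(a,b,c,d)$ and not on $n$ — is where the real content of the conjecture lies.
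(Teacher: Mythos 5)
This statement is posed in the paper as an open problem (Conjecture~\ref{Conjecture B}); the paper contains no proof of it, so there is nothing to compare your attempt against except the methods of Sections 2--4, which the authors evidently could not push this far. Judged on its own merits, your proposal is a reasonable research outline but not a proof, and the most serious gap is not the one you flag at the end. It is the step you dismiss with ``handled exactly as in Lemmas~\ref{Lem A}--\ref{Lem B}.'' After completing the square, what you need is not that the ternary form $T$ represents $m(n,\lambda)=c_0n-c_1\lambda^2$, but that it represents it by a vector lying in a prescribed coset of a sublattice (equivalently, a representation by a shifted lattice, i.e.\ a ternary quadratic polynomial): the congruence conditions on $(a,b,c)$ are what let you unwind back to integers $x,y,z,w$ with $ax+by+cz+dw=\lambda$. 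Lemma~\ref{modular forms} (Duke and Schulze-Pillot) applies to representations by a \emph{form}, not to congruence-restricted representations. In the paper this gap is bridged only by ad hoc devices specific to the chosen diagonal forms: sign changes exploiting diagonal automorphisms, and special identities such as (\ref{equation B}), (\ref{equation F}), and $h(3x+y,y,z)=3R(x,y,z)$, which realize the needed coset conditions inside a single class. For general $(a,b,c,d)$ no such identities exist, the class number of $T_{a,b,c,d}$ is unbounded, and the analytic theory for shifted ternary lattices has its own, strictly subtler spinor-exception phenomena; none of this is supplied by the lemmas you cite.

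By contrast, the uniformity-in-$n$ issue you single out as ``where the real content of the conjecture lies'' is arguably the more tractable part, and there is a standard repair you should know: spinor exceptional integers for a fixed genus lie in finitely many square classes $t\Z^{2}$, and if $c_0n-c_1\lambda_1^2=tu^2$ and $c_0n-c_1\lambda_2^2=tv^2$ with $\lambda_1^2\ne\lambda_2^2$, then $t(u^2-v^2)=c_1(\lambda_2^2-\lambda_1^2)$ is a fixed nonzero integer, which admits only finitely many solutions $(u,v)$ and hence excludes only finitely many $n$; taking $\mathcal{S}$ larger than the number of exceptional classes and applying the pigeonhole principle makes the dodge uniform in $n$. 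So a corrected assessment of your outline is: the local analysis and the spinor-exception bookkeeping can plausibly be organized as you describe, but the reduction from the original four-variable problem with a linear constraint to an \emph{unrestricted} ternary representation problem is exactly what fails in general, and closing it (e.g.\ via the half-integral weight machinery for shifted lattices, as in the authors' related work \cite{WS}) is the actual content separating Theorems~\ref{Thm 1.2}--\ref{Thm 1.3} from Conjecture~\ref{Conjecture B}.
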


\end{document}